\theoremstyle{plain}\newtheorem{Theorem}{Theorem}[section]
\newtheorem{Lemma}{Lemma}[section]
\newtheorem{Proposition}{Proposition}[section]
\newtheorem{Definition}{Definition}[section]
\newtheorem{Corollary}{Corollary}[section]
\newtheorem{Example}{Example}[section]
\newtheorem{Remark}{Remark}[section]
\newtheorem{Claim}{Claim}[section]
\newcommand{\re}{{\mathbb R}}
\newcommand{\co}{{\mathbb C}}
\newcommand{\la}{\lambda}
\newcommand{\La}{\Lambda}
\def\fa{{\mathcal{F}}}
\def\re{{\mathbb{R}}}
\def\po{{\partial}}
\def\La{{\Lambda}}
\def\Om{{\Omega}}
\def\GL{\operatorname{{GL}}}
\def\Aff{\operatorname{{Aff}}}
\def\dim{\operatorname{{dim}}}
\def\Om{{\Omega}}
\def\la{{\lambda}}
\def\La{{\Lambda}}
\def\re{{\mathbb{R}}}
\def\sep{\operatorname{{sep}}}
\def\Ker{\operatorname{{Ker}}}
\def\Ker{\operatorname{{Ker}}}
\def\Ker{\operatorname{{Ker}}}
\def\Ker{\operatorname{{Ker}}}
\def\Ker{\operatorname{{Ker}}}
\def\sing{\operatorname{{sing}}}
\def\Om{{\Omega}}
\def\la{{\lambda}}
\def\La{{\Lambda}}
\def\re{{\mathbb{R}}}
\def\h.o.t.{\operatorname{{h.o.t.}}}
\def\sing{\operatorname{{sing}}}
 \def\_#1{{\lower 0.7ex\hbox{}}_{#1}}
 \def\la{{\lambda}}
 \def\re{{\mathbb{R}}}
\def\Aff{\operatorname{{Aff}}}
\def\aff{\operatorname{{aff}}}
\def\sep{\operatorname{{sep}}}
\def\GL{\operatorname{{GL}}}
\def\DL{\operatorname{{DL}}}
\def\X{{\mathcal  X}}
\title[Transversely affine holomorphic foliations - I]{Transversely affine holomorphic foliations of arbitrary codimension - I}
\author{B. Sc\'ardua}
\address{B. Scardua: Inst. Matem\'atica,  Universidade Federal do Rio de Janeiro.   Caixa Postal 68530,
 Rio de Janeiro-RJ,  21.945-970 BRAZIL}
\date{}
\begin{document}

\maketitle

\begin{abstract}
We study holomorphic foliations with an affine homogeneous transverse structure. We give a  friendly characterization of  the case of transversely affine foliations in terms of matrix valued pairs of differential forms. This leads naturally to the study of the case  of foliations with singularities. A first extension theorem is then proved in the generic singularities framework.
\end{abstract}

%\tableofcontents

\thispagestyle{empty}

\section{Introduction}

The study of the geometry of foliations often is related to the study of their transverse structure. Among the most comprehensible structures are those given by
actions of Lie groups on some homogeneous space. This is the case of the so called {\it transversely homogeneous foliations} as introduced by Blumenthal (\cite{Blumenthal,Godbillon}. One of the first cases of such a class of foliations, is the class of transversely affine foliations. Such foliations have been studied in the smooth  real codimension one case by Bobo Seke in \cite{boboseke}. In \cite{scardua} the author  considers the case of codimension one holomorphic foliations with singularities. A classification is given for such objects on complex projective spaces.

In this paper we consider the case of arbitrary codimension. We focus on the holomorphic case, already aiming the case of foliations with singularities. Nevertheless, most of the material in the first sections  also holds in the (non-singular) smooth case.
In few words, our aim is to introduce the first ingredients in the study of the case of transversely homogeneous holomorphic foliations with singularities.

\subsection{Transversely affine foliations}

Let us clearly state the notions we use. The following definition is found in \cite{Blumenthal} or in \cite{Godbillon} pp. 245. We adapt it to the holomorphic case:

\begin{Definition}[transversely homogeneous foliation]
 {\rm Let $\mathcal F$ be a holomorphic  foliation on a
complex manifold $P$. Let $G$ be a simply-connected Lie group and $H
\subset G$ be a connected closed subgroup of $G$. We say that
$\mathcal F$ is {\it transversely homogeneous\/} in $P$ of model
$G/H$ if $P$ admits an open cover $\bigcup
\limits_{i \in I} U_i = P$ with holomorphic submersions
$y_i\colon U_i \to G/H$ satisfying: (i) $\mathcal F\big|_{U_i}$ is
defined by $y_i$, (ii) In each $U_i \cap U_j \ne \emptyset$ we have $y_i
= g_{ij}\circ y_j$ for some locally constant map $g_{ij}\colon U_i \cap
U_j \to G$. }
\end{Definition}
Notice that the group $G$ acts on the quotient $P=G/H$ by left translations.
In particular, we have:

\begin{Definition}
\label{Definition:transvaffine} {\rm A holomorphic  codimension-$q$ foliation ${\mathcal F}$ on
$M^n$ is  {\it transversely affine\/} if there is a family
$\{Y_i\colon U_i \to \co^q\}\_{i\in I}$ of holomorphic submersions
$Y_i\colon U_i \to \co^q$ defined in open sets $U_i \subset M$,
defining ${\mathcal F}\big|_{U_i}$,  covering  $M
= \bigcup\limits_{i \in I} \,U_i$ and such that for each $U_i \cap U_j \ne\phi$ we have  $Y_i =
A_{ij}Y_j + B_{ij}$ for some locally constant maps $A_{ij}\colon U_i\cap U_j \to \GL_q(\co)$, $B_{ij}\colon U_i\cap U_j \to \co^q$. }
\end{Definition}

\subsection{Integrable systems and foliations}
Recall that  a system of holomorphic 1-forms $\Omega:= \{\Omega_1,...,\Omega_q\}$ in an open set $U\subset M$ is {\it integrable} if for every $j \in \{1,...,q\}$ we have $d\Omega_j \wedge \Omega_1\wedge \ldots \wedge \Omega_q=0$ in $U$. If such a system of forms has maximal rank at each point, then it defines a codimension $q$ holomorphic foliation $\fa(\Omega)$ on $U$. The foliation is given by the integrable distribution of $(n-q)$-planes $\Ker(\Omega):= \bigcap \limits_{j=1} ^q \Ker(\Omega_j)$ where given $p \in M$ we define $\Ker(\Omega_j)(p) :=\{ v \in T_p(M) : \Omega_j(p) \cdot v =0\}$. Two such maximal rank integrable systems $\Omega$ and $\Omega^\prime$ define the same foliation in $U$ if, and only if, we have $\Omega_i = \sum\limits_{j=1} ^q a_{ij} \Omega_j$ for some holomorphic functions $a_{ij}$ in $U$, with the property that the $q\times q$ matrix $A=(a_{ij})_{i,j=1}^q$ is nonsingular at each point of $U$.
Given a system $\{\Omega_1,...,\Omega_q\}$ as above, we define a $q\times 1$ matrix valued 1-form $\Omega$ as having rows given by $\Omega_1,...,\Omega_q$. 
We denote by $\fa(\Omega)$ the foliation defined by this system.

Let us now introduce some notation.
Given a $k \times \ell$ and a $\ell \times s$ matrix valued 1-form
$A=(a_{ij})$ and $B=(b_{jt})$ respectively, we may define the wedge product $A \wedge B$ in the natural way, as the $k \times s$ matrix valued 1-form $A\wedge B$ whose
entry at the position $(i,t)$ is the 2-form $\sum\limits_{j=1} ^\ell a_{ij} \wedge b_{jt}$. In the same way we may define the exterior derivative $dA$ as the $k \times \ell$ matrix valued $2$-form whose entry at the position $(i,j)$ is the $2$-form $da_{ij}$.
\vglue.1in

\begin{Example}
\label{Example:intersection} {\rm  Let ${\mathcal F}_1,\ldots,{\mathcal F}_q$ be transversely affine
codimension-one foliations on $M^n$, which are transverse everywhere.
Then the intersection foliation $\bigcap\limits_{i=1}^{q}\,
{\mathcal F}_j$ is a codimension-$q$ foliation on $M$ which is transversely affine. Indeed,
assume that ${\mathcal F}_j$ is given by some holomorphic integrable 1-form $\Omega_j$ in
$M$. According to \cite{scardua} Chapter I Proposition 1.1 we have $d\Omega_j = \eta_j \wedge \Omega_j$,
$d\eta_j=0$, for some holomorphic 1-form $\eta_j$ in $M$. Define $\Om$ as the $q\times 1$ matrix valued 1-form in $M$ having $\Omega_1,...,\Omega_q$ as rows.
Also define 
$\eta$ the $q\times q$  diagonal matrix valued  holomorphic 1-form in $M$ having $\eta_1,...,\eta_q$ in its diagonal. 
Then, in the above notation we have $d\Om = \eta \wedge\Om$. Since $\eta$ is diagonal, we have $ d \eta =0= \eta \wedge \eta$.}
\end{Example}

As for the general case we have the following description:

\begin{Theorem}
\label{Theorem:forms}
Let ${\mathcal F}$ be a holomorphic  codimension-$q$
foliation on $M$. The
foliation ${\mathcal F}$ is transversely affine in $M$ if, and only
if, there exist an open cover $\bigcup\limits_{i \in I} \,U_i=M$ and
holomorphic $q\times1$, $q\times q$ matrix valued {\rm 1}-forms
$\Omega_i$, $\eta_i$ in $U_i$, $\forall\, i \in I$, satisfying:

\noindent {\rm a)} ${\mathcal F}\big|_{U_i} = {\mathcal
F}(\Omega_i)$

\noindent {\rm b)} $d\Omega_i = \eta_i \wedge \Omega_i$ and $d\eta_i
= \eta_i \wedge \eta_i$

\noindent {\rm c)} if $U_i \cap U_j \ne\phi$ then we have $\Omega_i
= G_{ij}\cdot\Omega_j$ and $\eta_i = \eta_j + dG_{ij}\cdot
G_{ij}^{-1}$ for some holomorphic $G_{ij}\colon U_i\cap U_j \to
\GL_q(\co)$.

\noindent  Moreover, two such collections
$\{(\Omega_i,\eta_i,U_i)\}\_{i\in I}$ and
$\{(\Omega_i^\prime,\eta_i^\prime,U_i)\}\_{i\in I}$ define the same
affine transverse structure for ${\mathcal F}$, if and only if, we
have $\Omega_i^\prime = G_i\cdot\Omega_i$ and $\eta_i^\prime =
\eta_i + dG_i\cdot G_i^{-1}$ for some holomorphic $G_i\colon U_i \to
\GL_q(\co)$.
\end{Theorem}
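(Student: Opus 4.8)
The plan is to package the pair $(\Omega_i,\eta_i)$ into the single $(q+1)\times(q+1)$ matrix-valued $1$-form $\omega_i:=\left(\begin{smallmatrix}\eta_i&\Omega_i\\0&0\end{smallmatrix}\right)$, taking values in the Lie algebra of the affine group $\Aff_q(\co)=\GL_q(\co)\ltimes\co^q$, and to read conditions (b) and (c) as the flatness of $\omega_i$ and its gluing cocycle. With this viewpoint the direct implication is immediate: given submersions $Y_i\colon U_i\to\co^q$ with $Y_i=A_{ij}Y_j+B_{ij}$ ($A_{ij},B_{ij}$ locally constant), I put $\Omega_i:=dY_i$ and $\eta_i:=0$. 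Then (a) holds since $\Ker(dY_i)=T\mathcal F$; (b) is trivial because $d\Omega_i=0=\eta_i\wedge\Omega_i$ and $d\eta_i=0=\eta_i\wedge\eta_i$; and (c) holds with $G_{ij}=A_{ij}$, since $\Omega_i=A_{ij}\Omega_j$ and $dA_{ij}=0$. The whole content of the theorem therefore lies in the converse.

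For the converse, the key observation is that $d\omega_i=\omega_i\wedge\omega_i$ unpacks exactly into the two equations of (b) and expresses that $\omega_i$ is a flat connection. After refining the cover so that each $U_i$ is a polydisc --- which alters neither $\mathcal F$ nor the transverse-affine property --- the equation $d\eta_i=\eta_i\wedge\eta_i$ lets me integrate the $\GL_q(\co)$-part: there is a holomorphic $H_i\colon U_i\to\GL_q(\co)$ with $\eta_i=dH_i\cdot H_i^{-1}$. Using $d\Omega_i=\eta_i\wedge\Omega_i$ one then checks $d(H_i^{-1}\Omega_i)=0$, so on the polydisc the closed $q\times1$ form $H_i^{-1}\Omega_i$ admits a primitive $Y_i$, i.e. $dY_i=H_i^{-1}\Omega_i$. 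Since $H_i$ is invertible, $Y_i$ is a submersion with $\Ker(dY_i)=\Ker(\Omega_i)$ and hence defines $\mathcal F|_{U_i}$, recovering (a) for the reconstructed data.

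The heart of the proof, and the step I expect to be most delicate, is the behaviour on $U_i\cap U_j$. There the cocycle (c) for $\eta$ is exactly the assertion that $H_i$ and $G_{ij}H_j$ share the same right logarithmic derivative $df\cdot f^{-1}$; consequently they differ by a locally constant factor, $H_i=G_{ij}H_j\,c_{ij}$ with $c_{ij}\in\GL_q(\co)$ locally constant. Combined with $\Omega_i=G_{ij}\Omega_j$ this gives $dY_i=H_i^{-1}\Omega_i=c_{ij}^{-1}H_j^{-1}G_{ij}^{-1}G_{ij}\Omega_j=c_{ij}^{-1}\,dY_j$, so $Y_i-c_{ij}^{-1}Y_j$ has vanishing differential and $Y_i=c_{ij}^{-1}Y_j+B_{ij}$ is affine, with $A_{ij}=c_{ij}^{-1}\in\GL_q(\co)$ and $B_{ij}\in\co^q$ locally constant. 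Thus $\mathcal F$ is transversely affine. The care required here is in matching (c) with the logarithmic derivatives --- this is precisely where the transformation law for $\eta$ is used --- and in correctly extracting the locally constant $c_{ij}$ and $B_{ij}$.

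For the final uniqueness clause I would repeat the same logarithmic-derivative bookkeeping with a single index. By the criterion recalled before Example~\ref{Example:intersection}, two maximal-rank systems define the same $\mathcal F|_{U_i}$ if and only if $\Omega_i'=G_i\Omega_i$ for some holomorphic $G_i\colon U_i\to\GL_q(\co)$; inserting this into $d\Omega_i'=\eta_i'\wedge\Omega_i'$ forces $\eta_i'=\eta_i+dG_i\cdot G_i^{-1}$. Conversely, this relation yields $H_i'=G_iH_i\,c_i$ with $c_i$ locally constant, whence the associated primitives satisfy $Y_i'=c_i^{-1}Y_i+\mathrm{const}$; the two developing families therefore differ by a constant affine change and define the same affine transverse structure.
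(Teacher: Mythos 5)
Your forward implication is fine (and your choice $\Omega_i=dY_i$, $\eta_i=0$ makes it even simpler than the paper's Proposition~\ref{Proposition:formsglobal}, which builds a pair attached to a prescribed global defining system), and the two local integration steps in the converse ($\eta_i=dH_i\cdot H_i^{-1}$ on a polydisc, then a primitive $Y_i$ of $H_i^{-1}\Omega_i$) match the paper's use of Darboux--Lie (Corollary~\ref{Corollary:localformetaomega}). The genuine gap is in the overlap step --- exactly the place you flagged as delicate. You claim that condition (c), $\eta_i=\eta_j+dG_{ij}\cdot G_{ij}^{-1}$, ``is exactly the assertion that $H_i$ and $G_{ij}H_j$ share the same right logarithmic derivative.'' It is not: by the product rule,
\[
d(G_{ij}H_j)\cdot (G_{ij}H_j)^{-1} \;=\; dG_{ij}\cdot G_{ij}^{-1} \;+\; G_{ij}\,\eta_j\,G_{ij}^{-1},
\]
so this equals the $\eta_i$ given by (c) only if $G_{ij}\,\eta_j\,G_{ij}^{-1}=\eta_j$, i.e.\ only if $[G_{ij},\eta_j]=0$. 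That is automatic in codimension $q=1$ (the case this theorem generalizes), but not for $q\ge 2$. Consequently Lemma~\ref{Lemma:glueing} cannot be invoked, the key relation $H_i=G_{ij}H_j\,c_{ij}$ with $c_{ij}$ locally constant is unsupported, and the chain $dY_i=c_{ij}^{-1}dY_j$ collapses. What (a)--(c) as literally written do give, by comparing $d\Omega_i=\eta_i\wedge\Omega_i$ with $d(G_{ij}\Omega_j)$, is only $[G_{ij},\eta_j]\wedge\Omega_j=0$; from this one gets $dY_i=M\,dY_j$ with $M=H_i^{-1}G_{ij}H_j$ merely constant along the leaves, i.e.\ the transitions $Y_i=\Phi_{ij}(Y_j)$ are a priori holomorphic, not affine. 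Your argument becomes correct verbatim if (c) is read as the gauge (conjugated) law $\eta_i=G_{ij}\eta_jG_{ij}^{-1}+dG_{ij}\cdot G_{ij}^{-1}$, which is the law one actually obtains when computing the transition between two pairs $(H_i\,dY_i,\,dH_i\cdot H_i^{-1})$.

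The same conjugation issue undercuts your uniqueness paragraph: substituting $\Omega_i'=G_i\Omega_i$ into $d\Omega_i'=\eta_i'\wedge\Omega_i'$ does not ``force'' $\eta_i'=\eta_i+dG_i\cdot G_i^{-1}$; it only gives $(\eta_i'G_i-dG_i-G_i\eta_i)\wedge\Omega_i=0$, i.e.\ the conjugated law up to matrix $1$-forms annihilating $\Omega_i$ (recall that $\eta$ is never unique given $\Omega$). For comparison with the paper: its proof of the converse (Proposition~\ref{Proposition:formsgeneral}) assumes the pair $(\Omega,\eta)$ is globally defined on $M$, so all transitions are trivial and the difficulty never appears; it then integrates $\eta=dG_i\cdot G_i^{-1}$ locally and glues via Lemma~\ref{Lemma:glueing} exactly as you do. Your attempt is thus the first place where condition (c) with nontrivial $G_{ij}$ is actually exercised, and the computation above shows the conjugation term cannot be dropped there: with the abelian law in (c), the case $q\ge 2$ is not established by your argument (nor, in fact, by the paper's own proof as written).
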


\begin{Remark}
{\rm Theorem~\ref{Theorem:forms} is stated in a much more abstract context by Blumenthal (see Theorem 2 page  144 as well as its Corollary 3.2 page 149). Nevertheless, it is required some
triviality hypothesis on principal fiber-bundles of structural group
     $G/H$, over the manifold $M$ (see also \cite{Godbillon} Prop. 3.6 pp. 249-250).
In our case, we will obtain it from some explicit computations and some classical results on Lie groups (see Theorem~\ref{Theorem:Darboux-Lie}).}
\end{Remark}

In the final section we prove that an extension result for the pair $(\Omega, \eta)$ associate to an affine transverse structure off some codimension one divisor, under the presence of generic singularities for the foliation on the divisor (cf. Theorem~\ref{Theorem:extensionlemma}).

\section{Auxiliary results}

We state some results of easy proof which will be used in the proof
of  Theorem~\ref{Theorem:forms}.
We start by the following well-known lemma from real analysis, adapted to the holomorphic case:
\begin{Lemma}
\label{Lemma:basic}
 Let $X\colon U\subset \co^n \to \GL_q(\co)$ be a holomorphic map,
then $d(X^{-1}) = -X^{-1}\cdot dX\cdot X^{-1}$.
\end{Lemma}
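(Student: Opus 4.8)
The plan is to exploit the defining relation $X\cdot X^{-1}=\Id$ and differentiate it using the Leibniz rule for matrix valued forms. First I would record that $X^{-1}$ is itself a holomorphic map $U\to\GL_q(\co)$: writing $X=(x_{ij})$, Cramer's rule expresses each entry of $X^{-1}$ as a cofactor of $X$ divided by $\det X$, and since $\det X$ is a nonvanishing holomorphic function on $U$ (because $X$ takes values in $\GL_q(\co)$), these quotients are holomorphic. Hence $d(X^{-1})$ makes sense as a $q\times q$ matrix valued $1$-form, namely the entrywise exterior derivative in the notation introduced above.

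Next I would apply $d$ to both sides of $X\cdot X^{-1}=\Id$. Working entrywise, the $(i,j)$ entry of the left-hand side is $\sum_k x_{ik}\,(X^{-1})_{kj}$, a sum of products of two holomorphic functions, so the ordinary Leibniz rule for the differential of a product applies term by term. Collecting the resulting $1$-forms back into matrix notation gives the product rule $d(X\cdot X^{-1})=dX\cdot X^{-1}+X\cdot d(X^{-1})$, where each product is the matrix product of a matrix valued form with a matrix valued function, combining wedge with scalar multiplication entrywise. Since the right-hand side $\Id$ is constant, $d(\Id)=0$, and we obtain $dX\cdot X^{-1}+X\cdot d(X^{-1})=0$.

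Finally I would solve this identity algebraically for $d(X^{-1})$. Rewriting it as $X\cdot d(X^{-1})=-\,dX\cdot X^{-1}$ and multiplying on the left by $X^{-1}$, then using $X^{-1}\cdot X=\Id$, yields $d(X^{-1})=-X^{-1}\cdot dX\cdot X^{-1}$, as claimed.

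The computation is entirely routine; the only point that deserves a word of care is the Leibniz rule for the exterior derivative of a product of matrix valued forms. Here, however, one factor is a $0$-form (a matrix of functions), so the rule reduces immediately to the classical product rule applied to each scalar entry, and no sign subtleties from the grading of forms arise. This is the only potential obstacle, and it is mild.
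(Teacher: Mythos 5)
Your proof is correct: the paper states this lemma without proof (as a ``well-known'' result ``of easy proof''), and your argument — checking holomorphy of $X^{-1}$ via Cramer's rule, differentiating the identity $X\cdot X^{-1}=\Id$ entrywise with the Leibniz rule, and solving algebraically — is precisely the standard argument the paper takes for granted. Your closing remark is also apt: since $X$ and $X^{-1}$ are matrix-valued $0$-forms, no sign issues from the grading of forms can arise, so the computation is indeed routine.
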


Next step is:

\begin{Lemma}
\label{Lemma:eta} Let $X\colon U \subset \co^n \to \GL_q(\co)$ be  holomorphic and let $\eta$ be defined diagonal by $\eta
= dX\cdot X^{-1}$ then we have $d\eta= \eta \wedge \eta$. Given a holomorphic $q\times q$ matrix valued 1-form $\eta$ in $U\subset \mathbb C^n$, such that $d\eta= \eta \wedge \eta$, and a holomorphic map $G \colon U \to \GL_q(\mathbb C)$, then the 1-form $\tilde \eta := \eta + dG. G^{-1}$ satisfies $d \tilde \eta = \tilde \eta \wedge \tilde \eta$.
\end{Lemma}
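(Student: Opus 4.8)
The plan is to prove the two assertions separately, using Lemma~\ref{Lemma:basic} as the sole computational input, together with the Leibniz rule and the bilinearity of the matrix wedge product. For the first assertion, set $\eta = dX\cdot X^{-1}$ and compute $d\eta$ by the matrix Leibniz rule. Since $d(dX)=0$, only one term survives and $d\eta = -\,dX\wedge d(X^{-1})$. Lemma~\ref{Lemma:basic} gives $d(X^{-1}) = -X^{-1}\cdot dX\cdot X^{-1}$, so $d\eta = dX\wedge\bigl(X^{-1}\cdot dX\cdot X^{-1}\bigr)$. Re-associating the $0$-form factor $X^{-1}$ through the wedge, this equals $(dX\cdot X^{-1})\wedge(dX\cdot X^{-1}) = \eta\wedge\eta$. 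The only thing to check is the entrywise bookkeeping of the matrix wedge, namely that $dX\wedge(X^{-1}\,dX\,X^{-1})$ and $(dX\,X^{-1})\wedge(dX\,X^{-1})$ carry the same scalar $1$-forms with the same coefficients; this is a direct index computation. (When $X$ is diagonal this degenerates to $\eta\wedge\eta=0$ and $d\eta=0$, matching Example~\ref{Example:intersection}.)

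For the second assertion, write $\omega := dG\cdot G^{-1}$, so that $\tilde\eta = \eta + \omega$. Applying the first part with $X=G$ yields $d\omega = \omega\wedge\omega$, while the hypothesis gives $d\eta = \eta\wedge\eta$; adding these, $d\tilde\eta = \eta\wedge\eta + \omega\wedge\omega$. Expanding the right-hand side by bilinearity of the wedge,
\[
\tilde\eta\wedge\tilde\eta = \eta\wedge\eta + \eta\wedge\omega + \omega\wedge\eta + \omega\wedge\omega .
\]
Subtracting the two expressions, the claimed identity $d\tilde\eta = \tilde\eta\wedge\tilde\eta$ is therefore equivalent to the vanishing of the interaction $2$-form $\eta\wedge\omega + \omega\wedge\eta$.

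Establishing this cross-term cancellation is the step I expect to be the main obstacle, and it is precisely where the higher-codimension case departs from the codimension-one theory of \cite{scardua}: when $q=1$ every entry is a scalar $1$-form, $\eta\wedge\omega = -\,\omega\wedge\eta$ by graded anticommutativity, and the cancellation is automatic. For $q>1$ the matrix graded bracket $\eta\wedge\omega+\omega\wedge\eta$ does not vanish entry-by-entry, so I would attack it head on for the stated form $\tilde\eta=\eta+\omega$: expand $\eta\wedge\omega+\omega\wedge\eta$ in matrix entries, substitute $\omega = dG\cdot G^{-1}$ together with $d(G^{-1})=-G^{-1}\,dG\,G^{-1}$ from Lemma~\ref{Lemma:basic}, and invoke the integrability hypothesis $d\eta=\eta\wedge\eta$ to control the resulting $2$-forms, taking care never to reorder the noncommuting factors $G$, $G^{-1}$ and $\eta$ illegitimately. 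This interaction term is the delicate heart of the statement; all the remaining manipulations are the formal identities recorded above, so the burden of the proof rests on showing that this bracket vanishes under the given hypotheses.
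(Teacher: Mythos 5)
Your reduction of the second assertion to the vanishing of the interaction term $\eta\wedge\omega+\omega\wedge\eta$, where $\omega:=dG\cdot G^{-1}$, is exactly right, and so is your suspicion that this is the delicate point --- but the head-on attack you defer cannot succeed, because that term does not vanish and the second assertion is in fact \emph{false} as stated for $q\geq 2$. Concretely, take $q=2$, coordinates $(z_1,z_2)$ on $U\subset\co^2$, and
\[
\eta=\begin{pmatrix}0&dz_1\\ 0&0\end{pmatrix},\qquad G=\begin{pmatrix}e^{z_2}&0\\ 0&1\end{pmatrix},\qquad \omega=dG\cdot G^{-1}=\begin{pmatrix}dz_2&0\\ 0&0\end{pmatrix}.
\]
Here $d\eta=0=\eta\wedge\eta$, so the hypothesis holds, yet $\tilde\eta=\eta+\omega$ has $d\tilde\eta=0$ while $(\tilde\eta\wedge\tilde\eta)_{12}=dz_2\wedge dz_1\neq 0$; equivalently $(\eta\wedge\omega+\omega\wedge\eta)_{12}=dz_2\wedge dz_1\neq 0$. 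So no amount of substituting $d(G^{-1})=-G^{-1}\,dG\,G^{-1}$ and invoking $d\eta=\eta\wedge\eta$, as you propose, can close the gap: the hypotheses are already satisfied by the counterexample. (Your observation that the $q=1$ case is automatic by graded anticommutativity is correct, and that is the only case in which the additive formula is unconditionally valid.)

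For comparison, the paper's own proof of the second part commits precisely the error you isolated: after expanding $\tilde\eta\wedge\tilde\eta$ it silently discards $\eta\wedge dG\cdot G^{-1}+dG\cdot G^{-1}\wedge\eta$, which is legitimate only for scalar ($q=1$) entries, so your blind attempt has in effect detected a genuine flaw in the lemma rather than missed an idea. The statement that is both true and the one actually needed for the change-of-trivialization computations elsewhere in the paper (compare $d(G\Omega)=dG\,G^{-1}\wedge G\Omega+G\eta G^{-1}\wedge G\Omega$) is the gauge-transformation law $\tilde\eta:=G\eta G^{-1}+dG\cdot G^{-1}$: a computation of the same kind as your first paragraph, using Lemma~\ref{Lemma:basic}, shows $d\tilde\eta=\tilde\eta\wedge\tilde\eta$, the conjugation supplying exactly the cross terms that the additive formula lacks; for $q=1$, or when $\eta$ and $dG\cdot G^{-1}$ are simultaneously diagonal as in Example~\ref{Example:intersection} (note the stray word ``diagonal'' in the lemma's statement, presumably a remnant of that situation), it collapses to $\eta+dG\cdot G^{-1}$. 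Your proof of the first assertion is correct and is essentially identical to the paper's computation.
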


\begin{proof}  Using Lemma~\ref{Lemma:basic} we have $d(X^{-1}) =-X^{-1}\cdot dX\cdot X^{-1}$. Thus
$$
\aligned
d\eta = d(dX\cdot X^{-1}) &= d(dX) \wedge X^{-1} +(-1) dX \wedge d(X^{-1})\\
&= (-1) dX \wedge (-X^{-1}\cdot dX\cdot X^{-1})\\
&=  (dX\cdot X^{-1}) \wedge  (dX\cdot X^{-1}) = \eta \wedge \eta.
\endaligned
$$
As for the second part, we have $ d \tilde \eta = d \eta + d(dG.G^{-1})=
\eta \wedge \eta + dG.G^{-1} \wedge dG.G{-1}$. On the other hand
$\tilde \eta \wedge \tilde \eta = (\eta + dG.G^{-1}) \wedge (\eta + dG.G^{-1}) = \eta \wedge \eta + \eta \wedge dG.G^{-1} + dG.G^{-1} \wedge \eta + dG.G^{-1} \wedge dG. G^{-1}= \eta \wedge \eta + dG.G^{-1} \wedge dG. G^{-1}$.
\end{proof}

Finally, we have:
\begin{Lemma}
\label{Lemma:glueing} Let $G,G^\prime\colon U \subset \co^n \to \GL_q(\co)$ be
holomorphic maps. Then we have $dG.G^{-1} = dG^\prime.G^{\prime -1}$
if and only if $G^\prime = G.A$ for some locally constant $A\colon U
\to \GL_q(\co)$.
\end{Lemma}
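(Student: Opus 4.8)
The plan is to prove the two implications separately, with the reverse (``if'') direction being a short direct computation and the forward (``only if'') direction relying on Lemma~\ref{Lemma:basic}.

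For the ``if'' direction, I would start from $G^\prime = G\cdot A$ with $A$ locally constant, so that $dA=0$. Applying the Leibniz rule for matrix valued forms gives $dG^\prime = dG\cdot A + G\cdot dA = dG\cdot A$, while $G^{\prime -1} = A^{-1}\cdot G^{-1}$. Multiplying these out yields $dG^\prime\cdot G^{\prime -1} = dG\cdot A\cdot A^{-1}\cdot G^{-1} = dG\cdot G^{-1}$, as desired.

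For the ``only if'' direction, the natural candidate is $A := G^{-1}\cdot G^\prime$. This is holomorphic with values in $\GL_q(\co)$ (being a product of invertible matrices) and satisfies $G^\prime = G\cdot A$ by construction, so it remains only to check that $A$ is locally constant, i.e. that $dA=0$. Here I would combine the Leibniz rule with Lemma~\ref{Lemma:basic}, namely $d(G^{-1}) = -G^{-1}\cdot dG\cdot G^{-1}$, to compute
$$dA = d(G^{-1})\cdot G^\prime + G^{-1}\cdot dG^\prime = -G^{-1}\cdot dG\cdot G^{-1}\cdot G^\prime + G^{-1}\cdot dG^\prime.$$
The hypothesis $dG\cdot G^{-1} = dG^\prime\cdot G^{\prime -1}$ rewrites as $dG^\prime = dG\cdot G^{-1}\cdot G^\prime$; substituting this into the second term cancels it against the first, giving $dA=0$. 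Since each entry of $A$ is then a holomorphic function with vanishing differential, $A$ is constant on each connected component of $U$, which is precisely what locally constant means.

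The computations are entirely routine, so there is no genuine obstacle; the only points requiring care are the correct handling of the noncommutative matrix products and the sign in Lemma~\ref{Lemma:basic}, together with the observation that $dA=0$ for a matrix valued map forces each scalar entry to be locally constant. Membership in $\GL_q(\co)$ is automatic from the definition $A = G^{-1}\cdot G^\prime$, so no separate invertibility argument is needed.
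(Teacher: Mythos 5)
Your proposal is correct and takes essentially the same approach as the paper: in the key (``only if'') direction you define $A := G^{-1}\cdot G^\prime$ and show $dA=0$ by combining the Leibniz rule with Lemma~\ref{Lemma:basic} and the hypothesis, which is exactly the paper's computation. The only difference is cosmetic, in the ``if'' direction, where you differentiate $G^\prime = G\cdot A$ directly (avoiding Lemma~\ref{Lemma:basic} altogether) instead of the paper's route of differentiating $G^{-1}\cdot G^\prime = A$ and then multiplying on the left by $G$ and on the right by $G^{\prime -1}$; both are equivalent routine calculations.
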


\begin{proof}  First we assume that $G^\prime = G\cdot A$ with $A$ locally
constant. Thus we have $G^{-1}\cdot G^\prime = A$ and therefore
$d(G^{-1}\cdot G^\prime) = dA = 0$ in $U$. This implies
$d(G^{-1})\cdot G^\prime + G^{-1}\cdot d(G^\prime) = 0$. Using that
$d(G^{-1})=-G^{-1}\cdot dG\cdot G^{-1}$  we have
$$
-G^{-1}\cdot dG\cdot G^{-1}\cdot G^\prime + G^{-1}\cdot dG^\prime =
0.
$$
Multiplying on the left this equality by $G$ we obtain
$$
-dG\cdot G^{-1}\cdot G^\prime + dG^\prime = 0.
$$
Multiplying on the right this last equality by $G^{\prime-1}$ we
obtain
$$
-dG\cdot G^{-1} + dG^\prime\cdot(G^\prime)^{-1} = 0,
$$
which proves the first part. Now we assume that $dG\cdot G^{-1} =
dG^\prime\cdot(G^\prime)^{-1}$ in $U$. Define $A = G^{-1}\cdot
G^\prime$ so that $G^\prime = G\cdot A$. We only have to show that
$dA=0$ in $U$.

\noindent  In fact, we have
$$
d(A) = d(G^{-1}\cdot G^\prime) = d(G^{-1})\cdot G^\prime +
G^{-1}\cdot d(G^\prime).
$$
Since  $d(G^{-1})=-G^{-1}\cdot dG\cdot G^{-1}$ we get
$$
\aligned
dA &= -G^{-1}\cdot dG\cdot G^{-1}\cdot G^\prime + G^{-1}\cdot dG^\prime\\
&= -G^{-1}\cdot (dG\cdot G^{-1} - dG^\prime\cdot
G^{\prime-1})G^\prime.
\endaligned
$$
Using the hypothesis $dG\cdot G^{-1} = DG^\prime\cdot G^{\prime-1}$
we obtain $dA=0$.
\end{proof}

Let $G$ be a Lie group and $\{\omega_1,...,\omega_\ell\}$ be a basis
of the Lie algebra of  $G$. Then we have $d\omega_k =
\sum\limits_{i<j} c_{ij} ^k \omega_i \wedge \omega_j$ for a family
constants $\{c_{ij}^k\}$ called the {\it structure constants} of the
Lie algebra in the given basis (\cite{Godbillon}). With this we have the classical theorem due to Darboux and Lie below. In few words, it says that maximal rank systems of 1-forms satisfying the same equations are locally pull-back of the group Lie algebra. The map is unique up to left translations in the Lie group.

\begin{Theorem}[Darboux-Lie, \cite{Godbillon}]
\label{Theorem:Darboux-Lie} Let  $G$ be a (complex) Lie group of dimension $\ell$. Let
 $\{\omega_1,...,\omega_\ell\}$ be a basis of the Lie algebra of
 $G$ with  structure constants $\{c_{ij}^k\}$.
Given a maximal rank system of (holomorphic) 1-forms
$\Omega_1,...,\Omega_\ell$ in a (complex) manifold $V$, such that $d\Omega_k=\sum_{i,j}^k
c_{ij}^k\, \, \Omega_i \wedge \Omega_j$, then:

\begin{enumerate}

\item For each point $p\in V$ there is a neighborhood
$p\in U_p \subseteq V$ equipped with a (holomorphic) submersion $f_p\colon U_p \to
G$ which defines $\fa$ in $U_p$ such that $f_p^*
(\omega_j)=\Omega_j$ in $U_p$, for all $j\in \{1,...,q\}$.

\item If $V$ is simply-connected we can take $U_p = V$.

\item  If $U_p \cap U_q \ne \emptyset$ then in the
intersection we have $f_q = L_{g_{pq}}(f_p)$ for some locally
constant left translation $L_{g_{pq}}$ in $G$.

\end{enumerate}

\end{Theorem}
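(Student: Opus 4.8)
The plan is to realize the $\Omega_k$ as pull-backs of the left-invariant (Maurer--Cartan) forms of $G$ by a classical graph construction. Let $\{\omega_1,\dots,\omega_\ell\}$ be the chosen basis of left-invariant $1$-forms on $G$, so that the hypothesis $d\omega_k=\sum_{i,j} c_{ij}^k\,\omega_i\wedge\omega_j$ is exactly the Maurer--Cartan relation. On the product $V\times G$, with projections $\pi_1\colon V\times G\to V$ and $\pi_2\colon V\times G\to G$, I would introduce the $1$-forms $\theta_k:=\pi_1^*\Omega_k-\pi_2^*\omega_k$ for $k=1,\dots,\ell$. Writing $a_i:=\pi_1^*\Omega_i$ and $b_i:=\pi_2^*\omega_i$, so that $\theta_i=a_i-b_i$, the elementary identity $a_i\wedge a_j-b_i\wedge b_j=\theta_i\wedge a_j+b_i\wedge\theta_j$ shows that
$$
d\theta_k=\pi_1^*d\Omega_k-\pi_2^*d\omega_k=\sum_{i,j}c_{ij}^k\,(a_i\wedge a_j-b_i\wedge b_j)
$$
lies in the ideal generated by $\theta_1,\dots,\theta_\ell$. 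By the Frobenius theorem the system $\theta_1=\dots=\theta_\ell=0$ is completely integrable and defines a holomorphic foliation $\mathcal H$ of codimension $\ell$ on $V\times G$.

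Next I would extract the local submersions. Since both $\{\omega_k\}$ and $\{\Omega_k\}$ have maximal rank $\ell$, the leaves of $\mathcal H$ are transverse to the fibres $\{v\}\times G$ and to the fibres $V\times\{g\}$: a vector tangent to a leaf and to $\{v\}\times G$ must annihilate all $\pi_2^*\omega_k$, hence vanish, and symmetrically for the other fibration. Thus $\pi_1$ restricts to a local biholomorphism on each leaf, and the leaf through a point $(p,g_0)$ is locally the graph of a holomorphic map $f_p\colon U_p\to G$ with $f_p(p)=g_0$. Along this graph the relation $\theta_k=0$ reads $f_p^*\omega_k=\Omega_k$, which is assertion (1); as the $\Omega_k$ have rank $\ell=\dim G$ the map $f_p$ is a submersion, and $\Ker(f_p^*\omega)=\Ker(\Omega)$ shows that $f_p$ defines $\fa$ in $U_p$.

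The hard part will be the global statement (2). The left-invariance of the $\omega_k$ makes each $\theta_k$ invariant under the action $g\cdot(v,h)=(v,gh)$, so $\mathcal H$ is a $G$-invariant, fibre-transverse foliation on the trivial principal bundle $\pi_1\colon V\times G\to V$, that is, a flat principal connection. To promote the local graphs to a global one I would argue that the horizontal lifts of paths are complete: in the given trivialization the lifting equation is a left-invariant ordinary differential equation on $G$, whose solutions exist for all time, so that $\pi_1$ restricts to a covering map $L\to V$ on every leaf $L$. Simple-connectedness of $V$ then forces this covering to be a biholomorphism, and the leaf through $(p,g_0)$ becomes the graph of a globally defined $f_p\colon V\to G$ with $f_p^*\omega_k=\Omega_k$. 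This completeness/monodromy step is the genuine content beyond Frobenius, and is precisely where the Lie-group hypothesis is essential.

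Finally, for (3), suppose $U_p\cap U_q\neq\emptyset$. There $f_p$ and $f_q$ have the same Darboux derivative, $f_p^*\omega_k=f_q^*\omega_k=\Omega_k$ for all $k$. By left-invariance of $\mathcal H$ the map $L_g\circ f_p$ is again a graph of $\mathcal H$ for each fixed $g\in G$, and the fundamental uniqueness property of the Maurer--Cartan form (two maps into $G$ with equal Darboux derivative on a connected set differ by a left translation, the Lie-group analogue of Lemma~\ref{Lemma:glueing}) yields $f_q=L_{g_{pq}}\circ f_p$ with $g_{pq}\in G$ locally constant on the intersection. This is exactly assertion (3), and completes the plan.
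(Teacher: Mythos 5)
The paper gives no proof of this theorem at all: it is quoted as a classical result, with the proof deferred to the cited references (Godbillon, and Spivak Ch.~10, Thms.~17--19), and your argument is precisely the standard graph/Frobenius proof given there — the distribution $\theta_k=\pi_1^*\Omega_k-\pi_2^*\omega_k$ on $V\times G$, leaves as graphs of the local developing maps, completeness of the invariant lifting ODE for the simply-connected case, and uniqueness up to left translation — so your proposal is correct and follows essentially the same route as the sources the paper relies on. One small slip worth noting: when $\dim V>\ell$, a leaf of $\mathcal H$ is \emph{not} disjoint in tangent directions from the fibres $V\times\{g\}$ (any $(u,0)$ with $u\in\Ker(\Omega)$, i.e.\ $u$ tangent to $\fa$, is tangent to both), so the ``symmetric'' half of your transversality claim fails as you justify it; but since only transversality to the fibres $\{v\}\times G$ is used to realize leaves as graphs over $V$, this does not affect the proof.
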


\section{Transversely affine
foliations and differential forms}

The first step in the proof of Theorem~\ref{Theorem:forms} is:

\begin{Proposition}
\label{Proposition:formsglobal} Let ${\mathcal F}$ be a holomorphic codimension-$q$
foliation on $M$. Suppose that ${\mathcal F}$ is defined by some
integrable system $\{\Omega_1,\ldots,\Omega_q\}$ of holomorphic {\rm
1}-forms. If $\fa$ is transversely affine then there is  a $q\times q$  matrix valued holomorphic {\rm
1}-form $\eta = (\eta_{ij})$ satisfying:
$$
d\Om = \eta\wedge\Om,\quad d\eta = \eta \wedge \eta  \qquad\text{where}\qquad \Om =
\begin{pmatrix} \Omega_1\\ \vdots\\ \Omega_q\end{pmatrix}
$$
\end{Proposition}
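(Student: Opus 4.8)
The plan is to work locally over each chart $U_i$ of the transversely affine atlas, write down the natural candidate for $\eta$ there, and then show that these local pieces are forced to agree on overlaps precisely because the affine transition matrices $A_{ij}$ are locally constant.

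First I would invoke the transversely affine structure of $\fa$: by Definition~\ref{Definition:transvaffine} there is an open cover $\{U_i\}$ of $M$ with holomorphic submersions $Y_i\colon U_i \to \co^q$ defining $\fa|_{U_i}$, and on each $U_i\cap U_j$ one has $Y_i = A_{ij}Y_j + B_{ij}$ with $A_{ij}$ locally constant in $\GL_q(\co)$ and $B_{ij}$ locally constant in $\co^q$. Writing $dY_i = (dY_i^1,\ldots,dY_i^q)^{T}$ as a column of $1$-forms, both $\Om|_{U_i}$ and $dY_i$ are maximal rank integrable systems defining the same foliation $\fa|_{U_i}$. Hence, by the characterization recalled in the introduction of when two such systems define the same foliation, there is a holomorphic map $H_i\colon U_i \to \GL_q(\co)$ with $\Om = H_i\cdot dY_i$ on $U_i$.

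Next I would set $\eta_i := dH_i\cdot H_i^{-1}$ on $U_i$ and verify the two equations locally. Since $d(dY_i)=0$, we obtain $d\Om = dH_i \wedge dY_i = (dH_i\cdot H_i^{-1}) \wedge (H_i\cdot dY_i) = \eta_i \wedge \Om$, using that multiplication by the $0$-form matrices $H_i^{\pm 1}$ associates freely through the wedge. Moreover Lemma~\ref{Lemma:eta} applies directly to $\eta_i = dH_i\cdot H_i^{-1}$ and yields $d\eta_i = \eta_i \wedge \eta_i$. Thus on each $U_i$ the pair $(\Om,\eta_i)$ satisfies the required system, and the whole content of the proposition reduces to showing that the $\eta_i$ do not depend on $i$ on overlaps.

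This gluing is the main point. On $U_i\cap U_j$, differentiating $Y_i = A_{ij}Y_j + B_{ij}$ and using $dA_{ij}=0$, $dB_{ij}=0$ gives $dY_i = A_{ij}\cdot dY_j$. Comparing $\Om = H_i\cdot dY_i = H_j\cdot dY_j$ then produces $(H_iA_{ij} - H_j)\,dY_j = 0$, and since the components of $dY_j$ are pointwise linearly independent (as $Y_j$ is a submersion) this forces $H_j = H_i A_{ij}$. Because $A_{ij}$ is locally constant, we compute $\eta_j = dH_j\cdot H_j^{-1} = (dH_i\cdot A_{ij})(A_{ij}^{-1}\cdot H_i^{-1}) = dH_i\cdot H_i^{-1} = \eta_i$ on $U_i\cap U_j$. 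Hence the locally defined forms patch to a single global holomorphic $q\times q$ matrix valued $1$-form $\eta$ on $M$, which by the local identities satisfies $d\Om = \eta\wedge\Om$ and $d\eta = \eta\wedge\eta$ everywhere. The delicate step is exactly this cancellation of $A_{ij}$: it is where the locally constant, affine nature of the transition data is essential, whereas the remaining computations are formal manipulations resting on Lemmas~\ref{Lemma:basic} and~\ref{Lemma:eta}.
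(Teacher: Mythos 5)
Your proof is correct and follows essentially the same route as the paper: write $\Om = G_i\, dY_i$ on each affine chart using that $\Om$ and $dY_i$ define the same foliation, set $\eta\big|_{U_i} = dG_i\cdot G_i^{-1}$, verify the two structure equations locally, and use the locally constant transition matrices $A_{ij}$ to glue. The only cosmetic difference is that you prove the overlap identity $dH_j\cdot H_j^{-1} = dH_i\cdot H_i^{-1}$ by a direct computation, where the paper invokes Lemma~\ref{Lemma:glueing}; incidentally, your transition relation $H_j = H_i A_{ij}$ has the factors in the correct order, which the paper's displayed equation (4) misprints as $G_j = A_{ij}G_i$.
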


\begin{proof}
 Let $\{\Omega_1,\ldots,\Omega_q\}$ be an integrable
holomorphic system which defines ${\mathcal F}$ in $M$ and suppose
$\{Y_i\colon U_i \to \co^q\}\_{i\in I}$ is a transversal affine
 structure for ${\mathcal F}$ in $M$ with
$$
Y_i = A_{ij}Y_j+B_{ij} \quad\text{in}\quad U_i \cap U_j \ne \emptyset
\, \, \, (1)
$$
as in Definition~\ref{Definition:transvaffine}.

\noindent  Since the submersions $Y_i$ define ${\mathcal F}$ we can
write
$$
\Om = G_i.dY_i  (2)
$$
in each $U_i$, for some holomorphic $G_i\colon U_i \to \GL_q(\co)$.
Here $\Om =
\begin{pmatrix} \Omega_1\\ \vdots\\ \Omega_q\end{pmatrix}$.

\noindent  In each $U_i \cap U_j \ne \emptyset$ we have:
$$
G_idY_i = G_j dY_j (3)
$$
and as it follows from (1)
$$
G_j = A_{ij}.G_i\,. (4)
$$
According to Lemma~\ref{Lemma:glueing} this last equality implies:
$$
dG_j.G_j^{-1} = dG_i.G_i^{-1} (5)
$$
in each $U_i \cap U_j \ne \emptyset$.

\noindent  This allows us to define $\eta$ in $M$ by
$$
\eta\big|_{U_i} = dG_i.G_i^{-1}. (6)
$$
According to Lemma~\ref{Lemma:eta} we have $d\eta = \eta \wedge \eta$. We also have in each $U_i$
$$
\aligned
d\Om = d(G_idY_i) &= dG_i \wedge dY_i\\
&= dG_i.G_i^{-1} \wedge dY_i\\
&= dG_i.G_i^{-1}\wedge G_i dY_i\\
&= \eta \wedge \Om.
\endaligned
$$
 The pair $(\Om, \eta)$
satisfies the conditions of the statement.

\end{proof}

Now we  study the converse of the proposition above.

\begin{Proposition}
\label{Proposition:formsgeneral}  Let ${\mathcal F}$ be a holomorphic condimension-$q$
foliation on $M$. The
foliation ${\mathcal F}$ is transversely affine in $M$ if, and only
if, there exist an open cover $\bigcup\limits_{i \in I} \,U_i=M$ and
holomorphic $q\times1$, $q\times q$ matrix valued {\rm 1}-forms
$\Omega_i$, $\eta_i$ in $U_i$, $\forall\, i \in I$, satisfying:

\noindent {\rm a)} ${\mathcal F}\big|_{U_i} = {\mathcal
F}(\Omega_i)$

\noindent {\rm b)} $d\Omega_i = \eta_i \wedge \Omega_i$ and $d\eta_i
= \eta_i \wedge \eta_i$

\noindent {\rm c)} if $U_i \cap U_j \ne\phi$ then we have $\Omega_i
= G_{ij}\cdot\Omega_j$ and $\eta_i = \eta_j + dG_{ij}\cdot
G_{ij}^{-1}$ for some holomorphic $G_{ij}\colon U_i\cap U_j \to
\GL_q(\co)$.

\noindent  Moreover, two such collections
$\{(\Omega_i,\eta_i,U_i)\}\_{i\in I}$ and
$\{(\Omega_i^\prime,\eta_i^\prime,U_i)\}\_{i\in I}$ define the same
affine transverse structure for ${\mathcal F}$, if and only if, we
have $\Omega_i^\prime = G_i\cdot\Omega_i$ and $\eta_i^\prime =
\eta_i + dG_i\cdot G_i^{-1}$ for some holomorphic $G_i\colon U_i \to
\GL_q(\co)$.
\end{Proposition}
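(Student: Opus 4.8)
The plan is to treat the biconditional and the uniqueness clause separately, dispatching the forward implication immediately and putting the weight of the argument on the converse. For the forward implication I would start from a transversely affine atlas $\{Y_i\colon U_i\to\co^q\}$ with $Y_i=A_{ij}Y_j+B_{ij}$, $A_{ij},B_{ij}$ locally constant, and simply set $\Omega_i:=dY_i$ and $\eta_i:=0$. Then (a) holds because $Y_i$ defines $\mathcal F|_{U_i}$; (b) is trivial since $d\Omega_i=d(dY_i)=0=\eta_i\wedge\Omega_i$ and $d\eta_i=0=\eta_i\wedge\eta_i$; and (c) holds with $G_{ij}:=A_{ij}$, because differentiating the affine relation gives $\Omega_i=dY_i=A_{ij}\,dY_j=G_{ij}\Omega_j$ while $dG_{ij}=0$ forces $\eta_i=0=\eta_j+dG_{ij}\cdot G_{ij}^{-1}$.

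For the converse I would first refine the cover so that each $U_i$ is a polydisc, hence simply connected; condition (c) survives refinement, with transitions between pieces of a single $U_i$ being the identity. The guiding observation is that (b) is exactly the structure equation of the affine group $\Aff(\co^q)$, with $\eta_i$ playing the role of a flat $\GL_q(\co)$-connection and $\Omega_i$ an associated section. Concretely, $d\eta_i=\eta_i\wedge\eta_i$ is the integrability condition for the linear system $dG_i=\eta_i\cdot G_i$, so by the holomorphic Frobenius theorem (the local converse of Lemma~\ref{Lemma:eta}) there is a holomorphic $G_i\colon U_i\to\GL_q(\co)$ with $\eta_i=dG_i\cdot G_i^{-1}$. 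Setting $\widetilde\Omega_i:=G_i^{-1}\Omega_i$ and using Lemma~\ref{Lemma:basic},
\[
d\widetilde\Omega_i=d(G_i^{-1})\wedge\Omega_i+G_i^{-1}\,d\Omega_i=-G_i^{-1}\,dG_i\,G_i^{-1}\wedge\Omega_i+G_i^{-1}(\eta_i\wedge\Omega_i)=0,
\]
the two terms cancelling precisely because $\eta_i=dG_i\cdot G_i^{-1}$. Thus each entry of $\widetilde\Omega_i$ is a closed holomorphic $1$-form on the simply connected $U_i$, so the holomorphic Poincaré lemma yields $\widetilde\Omega_i=dY_i$ for a holomorphic $Y_i\colon U_i\to\co^q$; since $\widetilde\Omega_i$ still has maximal rank and the same kernel as $\Omega_i$, the map $Y_i$ is a submersion defining $\mathcal F|_{U_i}$.

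It remains to verify that $\{Y_i\}$ is a transversely affine atlas, and this is where I expect the genuine difficulty. On $U_i\cap U_j$ we get $dY_i=G_i^{-1}\Omega_i=G_i^{-1}G_{ij}\Omega_j=G_i^{-1}G_{ij}G_j\,dY_j$, so $dY_i=M_{ij}\,dY_j$ with $M_{ij}:=G_i^{-1}G_{ij}G_j$; the target is to prove $M_{ij}$ locally constant, for then integration gives $Y_i=A_{ij}Y_j+B_{ij}$ with $A_{ij}=M_{ij}$ and $B_{ij}$ locally constant, which is exactly Definition~\ref{Definition:transvaffine}. Computing $dM_{ij}\cdot M_{ij}^{-1}$ with Lemma~\ref{Lemma:basic} and substituting $\eta_i=dG_i\cdot G_i^{-1}$, $\eta_j=dG_j\cdot G_j^{-1}$ reduces everything to the transition identity for $\eta$ in (c); this cancellation—the matrix analogue of the computation in the proof of Proposition~\ref{Proposition:formsglobal}, where $\eta$ transforms as a flat connection—is the crux of the argument and the step I would check most carefully, since it is exactly here that the precise shape of the $\eta$-identity in (c) is indispensable (and where the noncommutativity of $\GL_q(\co)$, absent in the codimension-one case, must be handled). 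Finally, for the uniqueness clause I would note that two such collections define the same transverse affine structure iff the associated developing maps differ by affine maps of $\co^q$; as both families define $\mathcal F|_{U_i}$, the relation $\Omega_i'=G_i\Omega_i$ holds for some holomorphic $G_i\colon U_i\to\GL_q(\co)$ (two maximal-rank systems with the same kernel differ by such a matrix), and matching the induced connections then forces the stated $\eta$-relation by a computation of the same type as Lemma~\ref{Lemma:glueing}, the converse implication being that computation read backwards.
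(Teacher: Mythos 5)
Your forward implication is correct, and is in fact cleaner than the paper's route (the paper passes through Proposition~\ref{Proposition:formsglobal}; your choice $\Omega_i=dY_i$, $\eta_i=0$, $G_{ij}=A_{ij}$ verifies (a)--(c) directly). The gap is in the converse, at precisely the step you flagged as the crux and left unverified: the cancellation you hope for does not happen. Carrying out your computation, condition (c) gives $dG_{ij}=(\eta_i-\eta_j)G_{ij}$, while $dG_i=\eta_iG_i$ and $dG_j=\eta_jG_j$, so for $M_{ij}=G_i^{-1}G_{ij}G_j$ one finds
\begin{equation*}
dM_{ij}=G_i^{-1}\bigl(-\eta_iG_{ij}+dG_{ij}+G_{ij}\eta_j\bigr)G_j
=G_i^{-1}\,\bigl(G_{ij}\eta_j-\eta_jG_{ij}\bigr)\,G_j .
\end{equation*}
What survives is exactly the commutator $[G_{ij},\eta_j]$, which vanishes identically when $q=1$ but not for $q\ge 2$. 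Moreover, hypotheses (a)--(c) do not force it to vanish: comparing $d\Omega_i=\eta_i\wedge\Omega_i$ with $d(G_{ij}\Omega_j)$ only yields $[G_{ij},\eta_j]\wedge\Omega_j=0$, which by Cartan's lemma puts the entries of the commutator in the span of the entries of $\Omega_j$ with a symmetry condition --- it does not kill them. Concretely, take $M=\co^3$ with coordinates $(x,y,w)$, $U_1=U_2=M$, and
\begin{equation*}
\Omega_2=\begin{pmatrix}dx\\ x\,dx+dy\end{pmatrix},\qquad
\eta_1=\eta_2=\begin{pmatrix}0&0\\ dx&0\end{pmatrix},\qquad
\Omega_1=G_{12}\,\Omega_2,\qquad G_{12}=\begin{pmatrix}1&0\\ 0&2\end{pmatrix}.
\end{equation*}
All of (a), (b), (c) hold (with $G_{12}$ constant, $\eta_1=\eta_2+dG_{12}\cdot G_{12}^{-1}$), yet the trivializations give $G_1=G_2=\begin{pmatrix}1&0\\ x&1\end{pmatrix}$, $Y_2=(x,y)$ and $Y_1=(x,\tfrac{x^2}{2}+2y)$, related by a non-affine map; equivalently $M_{12}=\begin{pmatrix}1&0\\ x&2\end{pmatrix}$ is nonconstant, and no other choice of trivializations (which can only differ by locally constant right factors) repairs this. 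So your charts genuinely fail to glue, and in particular the ``moreover'' clause fails for this data.

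For comparison: the paper never meets this difficulty because its written proof of the converse treats only a single global pair $(\Omega,\eta)$ --- effectively all $G_{ij}=\Id$ and one $\eta$ --- where Lemma~\ref{Lemma:glueing} alone closes the argument; the general case is dismissed as an easy consequence, which your attempt shows it is not. The underlying issue is that for $q\ge2$ the transition rule in (c) ought to be the gauge rule $\eta_i=G_{ij}\eta_jG_{ij}^{-1}+dG_{ij}\cdot G_{ij}^{-1}$ rather than the ``abelian'' rule $\eta_i=\eta_j+dG_{ij}\cdot G_{ij}^{-1}$ as written: with the gauge rule your computation gives $dM_{ij}=0$ identically and your whole argument closes, whereas with the stated rule the commutator obstruction above is unavoidable (the same noncommutativity already invalidates the second half of Lemma~\ref{Lemma:eta}, whose proof silently assumes $\eta\wedge dG\cdot G^{-1}+dG\cdot G^{-1}\wedge\eta=0$). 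So the gap in your proof is real, but it is a gap no blind proof of the statement as literally written can close; the fix is to amend condition (c), not to search for a cleverer cancellation.
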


\noindent  In order to prove in details the proposition above we
explicitly calculate the Lie algebra of $\Aff(\co^q)$. We consider $\GL_q(\mathbb C)$ as an open subset of the vector space $M(q\times q, \mathbb C)$ of complex $q\times q$ matrices. Using this we have:

\begin{Lemma}
\label{Lemma:affineliealgebra}  The Lie algebra $\aff(\co^q)$ of $\Aff(\co^q)$ has a basis
given by $\Om = X\cdot dY$, $\eta = dX\cdot X^{-1}$ where $X \in
\GL_q(\co)$ and $Y \in \co^q$ are global coordinates. Furthermore we
have $d\Om = \eta\wedge\Om$, $d\eta = \eta\wedge\eta$.
\end{Lemma}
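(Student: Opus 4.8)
The plan is to realize $\Aff(\co^q)$ concretely as a matrix Lie group and to recognise the pair $(\Om,\eta)$ as the blocks of its right-invariant Maurer--Cartan form. Writing an affine transformation of $\co^q$ in the form $Z\mapsto X(Z+Y)=XZ+XY$, I embed $\Aff(\co^q)$ into $\GL_{q+1}(\co)$ as the group of matrices
$$
g=g(X,Y)=\begin{pmatrix} X & XY\\ 0 & 1\end{pmatrix},\qquad X\in\GL_q(\co),\ Y\in\co^q,
$$
so that $(X,Y)$ are global coordinates on the group. The point of writing the translation block as $XY$ rather than as an independent vector $b$ is exactly that it will produce the factor $X$ in front of $dY$.

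Next I would compute the right-invariant Maurer--Cartan form $\Te:=dg\cdot g^{-1}$. Since $g^{-1}=\begin{pmatrix} X^{-1} & -Y\\ 0 & 1\end{pmatrix}$ and $dg=\begin{pmatrix} dX & dX\cdot Y+X\,dY\\ 0 & 0\end{pmatrix}$, a one-line block multiplication gives
$$
\Te=dg\cdot g^{-1}=\begin{pmatrix} dX\cdot X^{-1} & X\,dY\\ 0 & 0\end{pmatrix}=\begin{pmatrix} \eta & \Om\\ 0 & 0\end{pmatrix}.
$$
This exhibits the $q^2$ entries of $\eta=dX\cdot X^{-1}$ together with the $q$ entries of $\Om=X\,dY$ as precisely the components of $\Te$. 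They are linearly independent $1$-forms -- the entries of $\eta$ involve only the $dX_{ij}$ and those of $\Om$ only the $dY_k$, and in each block the transforming matrix is invertible -- and there are $q^2+q=\dim\Aff(\co^q)$ of them. Being the components of the Maurer--Cartan form they are invariant, so they constitute a basis of the invariant $1$-forms, that is, a basis of $\aff(\co^q)$ in the sense used in Theorem~\ref{Theorem:Darboux-Lie}.

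It then remains to verify the structure equations. The identity $d\eta=\eta\wedge\eta$ is the first part of Lemma~\ref{Lemma:eta} applied to $X$. For $d\Om=\eta\wedge\Om$ I would compute directly: $d\Om=d(X\,dY)=dX\wedge dY$, and since $\eta=dX\cdot X^{-1}$ gives $dX=\eta\cdot X$, substitution yields $dX\wedge dY=(\eta\cdot X)\wedge dY=\eta\wedge(X\,dY)=\eta\wedge\Om$. Both equations also follow simultaneously from the Maurer--Cartan equation $d\Te=\Te\wedge\Te$ (the matrix analogue of Lemma~\ref{Lemma:eta} applied to $g$): expanding the blocks of $\Te\wedge\Te$ reproduces $\eta\wedge\eta$ in the upper-left corner and $\eta\wedge\Om$ in the upper-right corner, matching $d\Te$ block by block.

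The only genuinely delicate step is the choice of coordinates in the first paragraph. Had I parametrised the group by $(X,b)$ with affine map $Z\mapsto XZ+b$, the same computation would return $db-\eta\,b$ in the off-diagonal block instead of $X\,dY$; the substitution $b=XY$ (equivalently $Y=X^{-1}b$) is what produces the clean form stated in the lemma. Everything after that is routine block-matrix algebra together with Lemma~\ref{Lemma:eta}.
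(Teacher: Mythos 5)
Your proof is correct, and it takes a genuinely different route from the paper's. The paper works directly on the product $\GL_q(\co)\times\co^q$ with the group law $(X_o,Y_o)\cdot(X,Y)=(X_oX,\,X_oY+Y_o)$: it differentiates left translations, exhibits the invariant vector fields, declares $\{\Om,\eta\}$ to be the corresponding dual basis, and leaves the structure equations as a ``straightforward calculation''. You instead embed $\Aff(\co^q)$ into $\GL_{q+1}(\co)$ as the matrices $g=\begin{pmatrix} X & XY\\ 0 & 1\end{pmatrix}$ and recognize $(\eta,\Om)$ as the blocks of the right-invariant Maurer--Cartan form $\Te=dg\cdot g^{-1}$, so that both equations $d\eta=\eta\wedge\eta$ and $d\Om=\eta\wedge\Om$ drop out of the single Maurer--Cartan equation $d\Te=\Te\wedge\Te$, which is the matrix-group version of Lemma~\ref{Lemma:eta}. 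Your route buys three things: the invariance of the forms is automatic, the two structure equations are unified into one, and the role of the parametrization $Z\mapsto X(Z+Y)$ (translation part $b=XY$) is made explicit rather than left implicit. This last point in fact repairs a convention mismatch in the paper's own argument: the dual basis of the left-invariant vector fields $(XV,XW)$ computed there is $\{X^{-1}dX,\,X^{-1}dY\}$, not $\{dX\cdot X^{-1},\,X\cdot dY\}$; the pair asserted in the lemma is invariant precisely in the coordinates you chose, so your derivation is the one that literally produces it. What the paper's approach buys in exchange is elementarity --- no embedding into $\GL_{q+1}(\co)$ is needed, and it stays in the product coordinates used elsewhere in the paper --- but as a verification of the lemma as stated, yours is the more watertight of the two.
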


\begin{proof} We denote by $M(q\times q, \mathbb C)$ the linear space of $q\times q$ complex matrices.  Since $\GL_q(\co) \subset  M(q\times q, \mathbb C)\cong \co^{q^2}$ as an
open set, we have a natural global coordinate $X$ in $\GL_q(\co)$.
Let us denote by $Y$ the natural global coordinate in $\co^q$. Fixed
any element $(X_o,Y_o) \in \Aff(\co^q)$ it defines a left
translation by
$$
\aligned &L_{(X_o,Y_o)}\colon \GL_q(\co)\times\co^q \longrightarrow
\GL_q(\co)\times\co^q\\
&L_{(X_o,Y_o)}(X,Y) = (X_oX, X_oY + Y_o).
\endaligned
$$
Therefore given any vector $(V,W) \in
T_{(X_o,Y_o)}(\GL_q(\co)\times\co^q)$ we have
$\DL_{(X_o,Y_o)}(X,Y)\cdot(V,W) = (X_oV,X_oW)$. Therefore a basis of
the left-invariant vector fields in $\Aff(\co^q)$ is given by:
$$
\X = (X,X) = X\cdot\frac{\po}{\po X} + X\cdot\frac{\po}{\po Y} \in
T(\Aff(\co^q)) = \GL_q(\co)\times\co^q.
$$
Thus a basis of $\aff(\co^q)$ is given by the dual basis
$\{\Om,\eta\}$ of $\{\X\}$. This shows that
$$
\begin{cases}
\Om &= X\cdot dY\\
\eta &= dX\cdot X^{-1}
\end{cases}
$$
is a basis for $\aff(\co^q)$.

\noindent  It is now a straightforward calculation to show that
$d\Om = \eta\wedge\Om$ and $d\eta = \eta\wedge\eta$.
\end{proof}

\noindent  Using these two lemmas and  Darboux-Lie Theorem (Theorem~\ref{Theorem:Darboux-Lie}) or alternatively,  the book of Spivak (\cite{spivak} Chapter 10, Theorem 17 page 397, \, Theorem 18 page 398 and Corollary 19 page 400) we obtain:

\begin{Corollary}
\label{Corollary:localformetaomega}
\begin{itemize}
\item[{\rm(a)}]

Let $\eta$ be a holomorphic $q\times q$ matrix valued {\rm
1}-form in $M$ satisfying $d\eta = \eta\wedge\eta$. Then locally in
$M$ we have $\eta = dX\cdot X^{-1}$ for some holomorphic $X\colon
U\subset M \to \GL_q(\co)$. If $M$ is simply-connected we can choose
$U=M$. Moreover given two such trivializations $(X,U)$ and
$(\widetilde X,\widetilde U)$ with $U \cap \widetilde U \ne \emptyset$
connected then we have $\widetilde X = X\cdot A$ for some $X \in
\GL_q(\co)$.

\item[{\rm b)}]

Let $\Om$, $\eta$ be holomorphic $q\times1$, $q\times
q$ matrix valued {\rm 1}-forms in $M$ satisfying $d\Om = \eta\wedge\Omega$
and $d\eta = \eta\wedge\eta$. Then given any point $p \in M$ and
given any simply- connected open neighborhood $p \in U_p \subset M$
we have $\Om = X\cdot dY$, $\eta = dX\cdot X^{-1}$ for some
holomorphic $\pi_p = (X,Y)\colon U_p \to \GL_q(\co)\times\co^q$.
Furthermore in each connected component of $U_p \cap \widetilde
U_{\tilde p} \ne \emptyset$ we have $\pi_q = L\circ\pi_{\tilde p}$ for
some left-translation $L\colon \GL_q(\co)\times \mathbb C^q  \to \GL_q(\co)\times \mathbb C^q$. In
particular if $M$ is simply-connected we can choose $U_p = M$.

\end{itemize}

\end{Corollary}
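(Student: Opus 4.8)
The plan is to read the two matrix identities $d\Om=\eta\wedge\Om$ and $d\eta=\eta\wedge\eta$ as the Maurer--Cartan structure equations of a Lie group and then integrate them by the Darboux--Lie theorem. For part (b) the relevant group is $G=\Aff(\co^q)=\GL_q(\co)\times\co^q$, whose Lie algebra $\aff(\co^q)$, by Lemma~\ref{Lemma:affineliealgebra}, carries the distinguished coframe $\Om_G=X\cdot dY$, $\eta_G=dX\cdot X^{-1}$ satisfying exactly $d\Om_G=\eta_G\wedge\Om_G$ and $d\eta_G=\eta_G\wedge\eta_G$. Expanding these matrix equations entry by entry exhibits the structure constants of $\aff(\co^q)$ in this coframe, and the hypotheses on $M$ expand into the identical scalar equations. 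Thus the system formed by the $q$ entries of $\Om$ together with the $q^2$ entries of $\eta$ satisfies on $M$ the same structure equations as the chosen basis of $\aff(\co^q)$. Part (a) is the same argument applied to the subgroup $\GL_q(\co)$, using only the coframe $\eta_G=dX\cdot X^{-1}$.

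First I would treat (a). Applying the Darboux--Lie theorem to $G=\GL_q(\co)$ and the system $\eta$ produces, on any simply-connected $U\subset M$ (and on all of $M$ when $M$ is simply connected), a holomorphic map $X\colon U\to\GL_q(\co)$ with $X^*\eta_G=\eta$, that is $dX\cdot X^{-1}=\eta$. For the uniqueness clause I would not reuse the Lie-theoretic uniqueness but instead quote Lemma~\ref{Lemma:glueing}: if $(X,U)$ and $(\widetilde X,\widetilde U)$ both trivialize $\eta$ then $dX\cdot X^{-1}=d\widetilde X\cdot\widetilde X^{-1}$ on the overlap, whence $\widetilde X=X\cdot A$ with $A$ locally constant, hence constant on the connected intersection.

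For (b) I would feed the full coframe $(\Om,\eta)$ into the Darboux--Lie theorem for $G=\Aff(\co^q)$, obtaining on a simply-connected $U_p$ a holomorphic $\pi_p=(X,Y)\colon U_p\to\GL_q(\co)\times\co^q$ with $\pi_p^*\Om_G=\Om$ and $\pi_p^*\eta_G=\eta$; unwinding the pullbacks gives precisely $\Om=X\cdot dY$ and $\eta=dX\cdot X^{-1}$. The overlap statement is the uniqueness part of the theorem: on a connected component of $U_p\cap\widetilde U_{\tilde p}$ the two maps $\pi_p,\pi_{\tilde p}$ pull the coframe back to the same forms, so they agree up to a locally constant left translation $L$ of $G$, $\pi_p=L\circ\pi_{\tilde p}$; simple connectedness of $M$ upgrades $U_p$ to $M$. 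As a self-contained alternative for the existence of $Y$, once $X$ is produced by (a) I would check directly from $\eta=dX\cdot X^{-1}$ and $d\Om=\eta\wedge\Om$ that $d(X^{-1}\Om)=0$ (the two terms cancel using Lemma~\ref{Lemma:basic}), and then obtain $Y$ with $dY=X^{-1}\Om$ from the holomorphic Poincar\'e lemma on the simply-connected $U_p$.

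The main obstacle is that the Darboux--Lie theorem as stated in Theorem~\ref{Theorem:Darboux-Lie} assumes the system of $1$-forms has maximal rank, so that the integrating map is a submersion; this hypothesis is not available here, since $\dim M$ may be far smaller than $\dim\Aff(\co^q)=q^2+q$ and the entries of $\eta$ are in no way independent. The fix is to invoke instead the general Cartan form of the theorem (Spivak, Chapter~10, Theorems~17--18 and Corollary~19), which integrates any Lie-algebra-valued $1$-form satisfying the Maurer--Cartan equation without a rank assumption and yields a map $\pi_p$ that need not be a submersion. The foliation $\fa$ is then recovered not from $\pi_p$ itself but from its second component $Y$, which is the submersion into $\co^q$; keeping this distinction clear, and noting that the failure of $\GL_q(\co)$ (hence of $\Aff(\co^q)$) to be simply connected is irrelevant because only simple connectedness of the domain enters, is the point that needs care.
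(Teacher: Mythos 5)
Your proof is correct and takes essentially the same approach as the paper, which deduces the corollary from the Lie-algebra computation of Lemma~\ref{Lemma:affineliealgebra} together with the Darboux--Lie theorem ``or alternatively'' Spivak's Chapter 10 results --- exactly the rank-assumption-free version you invoke to repair the fact that Theorem~\ref{Theorem:Darboux-Lie} as stated requires maximal rank. Your remaining ingredients (Lemma~\ref{Lemma:glueing} for the uniqueness clause in (a), and the closedness of $X^{-1}\Omega$ plus the holomorphic Poincar\'e lemma to produce $Y$) are the same tools the paper relies on, the latter appearing verbatim in its proof of Proposition~\ref{Proposition:formsgeneral}.
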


The proof of Proposition~\ref{Proposition:formsgeneral} is now an easy consequence of
 Corollary~\ref{Corollary:localformetaomega} above and of the arguments used in the  proof of Proposition~\ref{Proposition:formsglobal}.

\begin{proof}[Proof of Proposition~\ref{Proposition:formsgeneral}]
Proposition~\ref{Proposition:formsglobal} shows that if $\fa$ is transversely affine in $M$ then we can construct collections $(\Omega_j, \eta_j)$ in open subsets $U_j\subset M$ covering $M$ as stated.
\noindent  Conversely assume that $(\Om, \eta)$ is a pair, where $\Omega$ defines $\fa$ in $M$, like in
the statement. Since $\eta$ is holomorphic and satisfies $d\eta = \eta \wedge \eta$ in $M$, there
exists an open cover $\bigcup\,U_i$ of $M$ there are holomorphic
$G_i\colon U_i \to \GL_q(\co)$ such that $\eta\big|_{U_i} =
dG_i.G_i^{-1}$ (Corollary~\ref{Corollary:localformetaomega} (a)).

\noindent  Now, from condition $d\Om = \eta \wedge \Om$ we have
$$
\aligned
d(G_i^{-1}.\Om) &= -G_i^{-1}\,dG_i G_i^{-1} \wedge \Om + G_i^{-1}\,d\Om\\
&= -G_i^{-1}\,\eta \wedge \Om + G_i^{-1}\,\eta \wedge \Om = 0
\endaligned
$$
and therefore $G_i^{-1} = dY_i$ for some holomorphic $Y_i\colon U_i
\to \co^q$ which is a submersion.

\noindent  Therefore we have $\Om = G_i\,dY_i$ in $U_i$. Moreover
according to Lemma~\ref{Lemma:glueing} we have $G_i^{-1}\,G_j = A_{ij}$ for some
locally constant $A_{ij}\colon U_i\cap U_j \to \GL_q(\co)$, in each
$U_i \cap U_j \ne \emptyset$.

\noindent  Therefore $G_i\,dY_i = \Om = G_j\,dY_j =
G_i\,A_{ij}\,dY_j$ so that $dY_i = A_{ij}\,dY_j = d(A_{ij}\,Y_j)$ in
each $U_i\cap U_j \ne \emptyset$ and thus $Y_i = A_{ij}\,Y_j + B_{ij}$
for some locally constant $B_{ij}\colon U_i\cap U_j \to \co^q$. This
shows that ${\mathcal F}$ is tranversely affine in $M$.
\end{proof}

Theorem~\ref{Theorem:forms} is now a straightforward consequence of Propositions~\ref{Proposition:formsglobal} and ~\ref{Proposition:formsgeneral}.

%\begin{Remark}[The case of a global defining system] {\rm
%Let ${\mathcal F}$ be a holomorphic codimension-$q$
%foliation on $M$. Suppose that ${\mathcal F}$ is defined by some
%integrable system $\{\Omega_1,\ldots,\Omega_q\}$ of holomorphic {\rm
%1}-forms in $M$. Then, by the above results, any
%transverse affine  structure for ${\mathcal F}$ is given  by a $q\times q$  %matrix valued holomorphic {\rm
%1}-form $\eta = (\eta_{ij})$ satisfying:
%$$
%d\Om = \eta\wedge\Om,\quad d\eta = \eta \wedge \eta  %\qquad\text{where}\qquad \Om =
%\begin{pmatrix} \Omega_1\\ \vdots\\ \Omega_q\end{pmatrix}
%$$
%Moreover two such pairs $(\Om,\eta)$ and $(\Om^\prime, \eta^\prime)$
%define the same affine product transverse structure for ${\mathcal
%F}$ if and only if we have $\Om^\prime = G.\Om$ and $\eta^\prime =
%\eta +dG.G^{-1}$ for some holomorphic $G\colon M \to \GL_q(\co)$.}
%\end{Remark}

\section{A suspension example}

\noindent  The following example generalizes Example 1.5 of Chapter
I in \cite{scardua}.

\begin{Example} {\rm We will define a transversely affine codimension-$q$
holomorphic foliation on a compact manifold by the suspension
method:

\noindent  Let $M$ be a complex manifold and let $w$ be a $q\times1$
holomorphic matrix valued 1-form on $M$, closed and satisfying $f^*w =
Aw$ for some biholomorphism $f\colon M \to M$ and some hyperbolic
matrix $A \in \GL_q(\co)$.
  Define $\Om$ and $\eta$ in the product
$M\times\GL_q(\co)$ by $\Om(x,T) = T.w(x)$ and $\eta(x,T) =
dT.T^{-1}$.

\noindent  Then we have
$$
\aligned
d\Om(x,T) &= dT \wedge w(x) + T\,dw(x) =\\
&= dT \wedge w(x) = dT.T^{-1} \wedge Tw(x) =\\
&= \eta(x,T) \wedge \Om(x,T)
\endaligned
$$
and also,
$$
\aligned
d\eta(x,T) &= d(dT.T^{-1}) = dT.T^{-1} \wedge dT.T^{-1} =\\
&= \eta(x,T) \wedge \eta(x,T).
\endaligned
$$
Moreover the biholomorphism $F\colon M\times \GL_q(\co) \to
M\times\GL_q(\co)$ defined by $F(x,T) = (f(x),T.A^{-1})$ satisfies
$$
F^*\Om = TA^{-1}f^*w = TA^{-1}Aw = Tw = \Om
$$
and
$$
F^*\eta = d(TA^{-1})\cdot(TA^{-1})^{-1} = dT.T^{-1} = \eta.
$$
Thus, by Theorem~\ref{Theorem:forms} the pair $\Om$, $\eta$ induces a codimension-$q$
non-singular holomorhic foliation $\widetilde{\mathcal F}$ which is
transversely affine in $M\times\GL_q(\co)$. This foliation
induces a codimension-$q$ non-singular foliation ${\mathcal F}$ on
the quotient manifold $V = (M\times\GL_q(\co)/\mathbb Z$ by the
action $\mathbb Z\times(M\times\GL_q(\co)) \to M\times\GL_q(\co)$,
$\eta,(x,T) \mapsto (f^n(x),T.A^{-n})$. This last foliation
${\mathcal F}$ inherits and affine transverse structure from
$\widetilde{\mathcal F}$.}
\end{Example}

%%%%%%%%%%%%%%%%%%%%%%%%%%%%%%%%%%%%%%%%%%%%%%%%%%%%%%%%%%%%%%%%%%%%%%%%
%%%%%%%%%%%%%%%%%%%%%%%%%%%%%%%%%%%%%%%%%%%%%%%%%%%%%%%%%%%%%%%%%%%%%%

\section{Holomorphic foliations with singularities}

A a codimension $q$ holomorphic foliation with singularities $\fa$  on a complex manifold $M$ of dimension $n\geq 2$ is defined as a pair $(\fa_0, \sing(\fa))$, where
$\sing(\fa)\subset M$ is an analytic subset of codimension $\geq q+1$, and a holomorphic foliation $\fa_0$ in the classical, in the open manifold $M\setminus\sing(\fa)$.
Then, all the notions for $\fa$ are defined in terms of $\fa_0$. For instance, the leaves of $\fa$ are defined as the leaves of $\fa_0$, and their holonomy groups are defined in the same way. We may assume that the {\it singular set} $\sing(\fa)$ is saturated in the sense that there is no other pair $\fa^\prime=(\fa_0 ^\prime, \sing(\fa^\prime)$ with $\sing(\fa^\prime)\subsetneqq \sing(\fa)$ and such that $\fa_0 ^\prime$ coincides with $\fa_0$ on $M\setminus \sing(\fa)$.

\begin{Definition}
\label{Definition:transvaffinesing} {\rm A  codimension-$q$ holomorphic foliation with singularities ${\mathcal F}$ on
$M^n$ is said to be {\it transversely affine\/} if there is a family
$\{Y_i\colon U_i \to \co^q\}\_{i\in I}$ of holomorphic submersions
$Y_i\colon U_i \to \co^q$ defined in open sets $U_i \subset M$,
defining ${\mathcal F}$, and satisfying $M\backslash \sing(\fa)
= \bigcup\limits_{i \in I} \,U_i$ and with affine relations $Y_i =
A_{ij}Y_j + B_{ij}$ for some $A_{ij}\colon U_i\cap U_j \to
\GL_q(\co)$, $B_{ij}\colon U_i\cap U_j \to \co^q$ locally constant
in each $U_i \cap U_j \ne\phi$. }
\end{Definition}

We usually distinguish two cases in the definition above: the codimension one and the dimension one cases. Since we are interested in the codimension $\geq 2$ case, we shall focus on the second case.

\subsection{Generic singularities}
In this paragraph we introduce what we will consider as generic type of a
singularity for a codimension-$q \ge 2$ foliation.
Given a  holomorphic foliation with singularities $\fa$ on a complex manifold $M$, the singular set  of $\fa$ is an analytic  subset $\sing(\fa) \subset M$ of codimension $\geq 2$, also having dimension $\dim \sing(\fa) \leq \dim(\fa)$. In particular, it can have a component of dimension $\dim(\fa)$, as well as a component of dimension $\dim(\fa) -1$. As for this second case, by intersecting with appropriate transverse small discs we may consider the following model of generic singularity:

\subsubsection{Isolated singularities}

\begin{Definition} {\rm Let $\fa$ be a germ of an {\em isolated}  one-dimensional foliation singularity  at the origin
$0\in\mathbb C^{q+1}$. The singularity is called {\it Poincar\'e non-resonant} if the convex hull of the set of eigenvalues of the linear part $DX(0)$ does not contain the origin, and there is no resonant $\lambda_j = n_1 \lambda _1 + ... n_{q+1} \lambda_{q+1}$ for $n_1,...,n_{q+1} \in \mathbb N$.
In this case, by Poincar\'e linearization theorem (\cite{[Brjuno]}, \cite{[Dulac]})
the singularity {\it linearizable
without resonances} (\cite{mafra-scardua}):  it is given in some neighborhood $U$ of $0\in\mathbb C^{q+1}$ by a holomorphic vector field $X$ which is
analytically linearizable as
$X={\displaystyle \sum_{j=1}^{q+1}\lambda_{j}z_{j}\dfrac{\partial}{\partial
z_{j}}},$ with eigenvalues $\lambda_{1},\cdots,\lambda_{q+1}$ satisfying the following non-resonance
hypothesis:

\noindent {\sl If $n_{1},\cdots,n_{q+1}\in\mathbb Z$ are such that
$\sum_{j=1}^{q+1}n_{j}\lambda_{j}=0,$
 then $n_{1}=n_{2}=\cdots=n_{q+1}=0.$
   }
}
\end{Definition}

In the above situation,
define  1-forms
$\omega^{1},\cdots,\omega^{q}$ on $U\setminus\Lambda$ by setting
$\omega^{\nu}(X)=0$ and
$\omega^{\nu}=\sum_{j=1}^{q+1}\alpha_{j}^{\nu}\frac{dz_{j}}{z_{j}}$, 
 where $\nu=1,\cdots,q$ and $\alpha_{j}^{\nu}\in\mathbb C$. From this we
get the following system of equation
$\sum_{j=1}^{q+1}\alpha_{j}^{\nu}\lambda_{j}=0,\quad\nu=1,\cdots,q.$
the equation
$\sum_{j=1}^{q+1}\lambda_{j}z_{j}=0$
 defines a hyperplane in $\mathbb C^{q+1}$ implies that we can choose $q$
linearly independent vectors $\vec{\alpha}_{1},\cdots,\vec{\alpha}_{q}$
say
$\vec{\alpha}_{\nu}=(\alpha_{1}^{\nu},\cdots,\alpha_{q}^{\nu},\alpha_{q+1}^{\nu})\in\mathbb C^{q+1}$
so that
$\sum_{j=1}^{q+1}\alpha_{j}^{\nu}\lambda_{j}=0,\quad\nu=1,\cdots,q.$
 and therefore the system $\omega^{1},\cdots,\omega^{q}$ has maximal rank
$q$ outside the coordinate hyperplanes.

\begin{Lemma}[\cite{mafra-scardua}]
\label{Lemma:nonresonantconstant}
Let $f(z)$ be a holomorphic function on the set
$U\setminus\left\{ z_{1}\cdot\ldots\cdot z_{q+1}=0\right\} $, where
$U$ is a connected neighborhood of the origin in $\mathbb C^{q+1}$. Then $f(z)$ is
constant provided that
$df\wedge\omega^{1}\wedge\cdots\wedge\omega^{q}=0$.
\end{Lemma}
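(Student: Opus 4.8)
The plan is to read the hypothesis $df\wedge\omega^1\wedge\cdots\wedge\omega^q=0$ as the assertion that $f$ is a first integral of the linearized vector field $X=\sum_{j=1}^{q+1}\lambda_j z_j\,\partial/\partial z_j$, and then to exploit the non-resonance of the eigenvalues $\lambda_1,\dots,\lambda_{q+1}$ to annihilate every nontrivial Laurent monomial in the expansion of $f$.

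First I would convert the wedge identity into a differential equation. Outside the coordinate hyperplanes the system $\omega^1,\dots,\omega^q$ has maximal rank $q$, so these $q$ holomorphic $1$-forms are pointwise linearly independent inside the $(q+1)$-dimensional cotangent space. The identity $df\wedge\omega^1\wedge\cdots\wedge\omega^q=0$ says that the $q+1$ covectors $df,\omega^1,\dots,\omega^q$ are linearly dependent at each point; since the $\omega^\nu$ alone are independent, the dependence relation must involve $df$ with a nonzero coefficient, and hence $df$ lies in the span of $\omega^1,\dots,\omega^q$ at every point of $U\setminus\{z_1\cdots z_{q+1}=0\}$, say $df=\sum_\nu g_\nu\,\omega^\nu$. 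As $\omega^\nu(X)=0$ for each $\nu$ by the very definition of the $\omega^\nu$, contracting with $X$ gives $df(X)=X(f)=0$, that is, the Euler-type equation $\sum_{j=1}^{q+1}\lambda_j z_j\,\partial f/\partial z_j=0$.

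Next I would expand $f$ in a Laurent series. Choosing a polydisc $\Delta\subset U$ centered at the origin, the set $\Delta\setminus\{z_1\cdots z_{q+1}=0\}$ is a product of punctured discs, a Reinhardt domain, on which the holomorphic function $f$ admits a convergent multi-variable Laurent expansion $f=\sum_{m\in\mathbb Z^{q+1}}c_m z^m$ with $z^m=z_1^{m_1}\cdots z_{q+1}^{m_{q+1}}$. Applying $X$ term by term yields $X(z^m)=(\sum_j\lambda_j m_j)\,z^m$, so $X(f)=0$ forces $(\sum_{j=1}^{q+1}\lambda_j m_j)\,c_m=0$ for every multi-index $m$. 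Thus $c_m\ne 0$ can happen only if $\sum_{j=1}^{q+1}\lambda_j m_j=0$ with $m_j\in\mathbb Z$, and by the non-resonance hypothesis this forces $m=0$. Hence $f\equiv c_0$ on $\Delta\setminus\{z_1\cdots z_{q+1}=0\}$; since removing the coordinate hyperplanes (a proper analytic subset) from the connected open set $U$ leaves a connected set, the identity principle propagates this constancy to all of $U\setminus\{z_1\cdots z_{q+1}=0\}$.

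The individual steps are short, and I expect the only point demanding genuine care to be the first one: the passage from the purely algebraic wedge identity to the vanishing $X(f)=0$. This relies on the maximal-rank property of $\{\omega^\nu\}$ recorded just before the statement, which itself uses the linear independence of the vectors $\vec\alpha_\nu$ together with the fact that all $\lambda_j$ are nonzero (a consequence of non-resonance, since $\lambda_j=0$ would be a resonance relation). Once $X(f)=0$ is in hand, the Laurent-series plus non-resonance argument is entirely routine.
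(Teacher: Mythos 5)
Your proof is correct. Note that the paper does not actually prove this lemma---it is imported from \cite{mafra-scardua} as a cited result---so there is no internal proof to compare against; your argument (pointwise linear algebra to get $df\in\Span\{\omega^1,\dots,\omega^q\}$, contraction with $X$ to get $X(f)=0$, then annihilation of all nontrivial Laurent monomials on the punctured polydisc via non-resonance, followed by the identity principle on the connected set $U\setminus\{z_1\cdots z_{q+1}=0\}$) is the standard one and is essentially the proof given in the cited reference.
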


%%%%%%%%%%%%%%%%%%%%%%%%%%%%%%%%%%%%%%%%%%%%%%%%%%%%%%%%%%%%%%%%%%%%

\begin{Definition}[type II generic singularities]
\label{Definition:isolatedtype}
{\rm
A singularity $p \in \sing(\fa)$ will be called {\it type II generic singularity} if $p$ belongs to a smooth part of the set $\sing(\fa)$, where:
 \begin{itemize}
 \item There is a unique  branch $\sing(\fa)_p\subset \sing(\fa)$ through $p$.
  \item $\dim \sing(\fa)_p = \dim (\fa) - 1$
  \item For some (and therefore for every) transverse disc $\Sigma_p$, with $\Sigma_p \cap \sing(\fa)_p = \Sigma_p \cap \sing(\fa) = \{p\}$, of dimension $q +  1$, the induced foliation $\fa\big|_{\Sigma_p}$ exhibits an isolated non-resonant Poincar\'e type singularity at the origin $p$.
      \end{itemize}
      }
\end{Definition}

 \subsubsection{Non-isolated singularities} Now we focus on the components of
the singular set that cannot be reduced to isolated singularities by transverse sections.  Let us first recall that some notions for codimension one foliations. Given a codimension-one holomorphic foliation with singularities $\fa$ on a complex manifold $M$,  a singular point $p \in \sing(\fa)$ is a {\it Kupka-type singularity} (cf. \cite{omegar, scardua}), if $\fa$ is given in some neighborhood $U$ of $p$ by a holomorphic integrable 1-form $\omega$, such that $\omega(p)=0, \, d \omega( p ) \ne 0$.
In this case, if $U$ is small enough, there exists a system of local coordinates
$(x,y,z_1,\ldots,z_{n-2}) \in U$ of $M$, centered at $p$, such that
${\mathcal F}\big|_U$ is given by
$\alpha(x,y)=0$, for some holomorphic 1-form $\alpha= A(x,y) dx + B(x,y) dy$.  The 1-form $\alpha$, so called the {\it transverse type of $\fa$ at $p$},  has an isolated singularity at the origin $0 \in \mathbb C^2$ and satisfies $d \alpha (0)\ne 0$.
The generic type is then defined as follows:
 We shall say that a singularity $p \in\sing(\fa)$ is {\it Poincar\'e type} if it is Kupka type and its corresponding transverse type is of the form $xdy-\lambda ydx + hot=0,\qquad \la \in \mathbb C\backslash(\mathbb R_-\cup \mathbb Q_+)$.
The reasons for this are based on the classification of singularities of germs of foliations in dimension two (see \cite{seidenberg}, \cite{camacho-linsneto-sad}).
In this case, the singularity $\alpha(x,y)=0$ is analytically linearizable, so that
there are coordinates $(x,y,z_1,\ldots,z_{n-2})$ as above, such that $\fa$ is given in these coordinates by $xdt - \lambda y dx=0$. Let us now motivate our second type of generic singularity for codimension $q \geq 2$ foliations, by
discussing an example:
\begin{Example}{\rm
Let ${\mathcal F}_1,\ldots,{\mathcal F}_q$ be holomorphic singular codimension
one foliations on a complex manifold $M$ of dimension $q+1$. Assume that the foliations $\fa_j$  are transverse outside the union of
their singular sets and their set of tangent points. Then we can define
in the natural way the {\it intersection foliation\/} ${\mathcal F}
= \bigcap \limits_{j=1}^{q}\,{\mathcal F}_j$ (as in Example~\ref{Example:intersection}) whose leaves are
obtained as the connected components of the intersection of the
leaves of ${\mathcal F}_1,\ldots,{\mathcal F}_q$ through points of
$M$ and has singular set $\sing(\fa)
=\bigcup\limits_{j=1}^{q}\,\sing({\mathcal F}_j) \cup T_2$ where $T_2$
is the union of the codimension $\ge 2$ components of the set of
tangent points of the foliations.
Suppose that ${\mathcal F}_j$ has only Poincar\'e type singularities, as defined above. Then, given any point $p
\in \sing({\mathcal F}_j)\backslash\bigcup\limits_{ i \ne j} \,\sing({\mathcal F}_i)$, there exists a
local chart $(x,y,z_1,\ldots,z_{n-2}) \in U$ of $M$, centered at $p$, such that
${\mathcal F}_j\big|_U$ is given by
$$
xdy-\la ydx=0,\qquad \la \in \co\backslash(\re_-\cup \mathbb Q_+)
$$
and for each $i\ne j$, ${\mathcal F}_i\big|_U$ is regular given by $dz_{k_i}=0$ for some
$k_i \in \{1,\ldots,n-2\}$.

}

\end{Example}

\begin{Definition}[type I generic singularities]
\label{Definition:intersectiontype}
{\rm  Let ${\mathcal F}$ be a codimension-$q$ foliation on $M^n$. A singularity $p \in \sing({\mathcal F})$ is a  {\it type I generic singularity}, if $p$ belongs to a smooth part of the set $\sing(\fa)$, where:
 \begin{itemize}
 \item There is a unique  branch $\sing(\fa)_p\subset \sing(\fa)$ through $p$.
  \item $\dim \sing(\fa)_p  =  \dim (\fa)$
\item  There is a local chart $(x,y,z_1,\ldots,z_{n-2}) \in U$ of
$M$, centered at $p$, such that ${\mathcal F}\big|_U$ is given by
$$
xdy -\la y dx = 0, \qquad \la \in \co\backslash(\mathbb R_- \cup \mathbb Q_+)
$$
and $dz_j=0$, $j=1,\ldots,q-1$.
\end{itemize}
\noindent  Therefore in a neighborhood of $p$, the foliation ${\mathcal F}$ has the
structure of the {\em intersection} (not product) of a singular linear  foliation $xdy - \la y dx = 0$ on $(\co^2,0)$ and $q-1$ regular  trivial foliations.

\noindent  We have $s({\mathcal F}) \cap U = \{(x,y,z_1,\ldots,z_{n-2}) \in U \mid
x=y=0\}$. If we define $\La = \{xy=0\} \cap \{z_1 =\cdots= z_{q-1} = 0\}$ then
$\La$ consists of two codimension-$q$ invariant local submanifolds $\La_1 \cup
\La_2$ which intersect transversely at the point $p = \La_1 \cap \La_2$.}
\end{Definition}

\section{Extending affine transverse structures with poles}

\noindent  Now consider the following situation:
\begin{enumerate}

\item ${\mathcal F}$ is a codimension-$q$ singular foliation on $M$,
 \item $\La \subset M$ is an analytic irreducible invariant subvariety of
codimension-$q$ (i.e., $\La\backslash \sing(\fa)$ is a leaf of
${\mathcal F}$),
\item   There are analytic codimension-one subvarieties
$S_1,\ldots,S_q \subset M$ such that $\La$ is an irreducible
component of $\bigcap\limits_{j=1} ^q \,S_j$ and $S_j$ is foliated
by ${\mathcal F}$, $j = 1,\ldots,q$.
\end{enumerate}

\noindent  Under these assumptions we make the following definition:

\begin{Definition}
\label{Definition:adaptedeta}{\rm  Let $\{\Omega_1,\ldots,\Omega_q\}$ be an integrable system of
holomorphic 1-forms defining $\fa$. A $q\times q$ matrix valued {\sl meromorphic} 1-form  $\eta$ defined in a neighborhood of $\La$ is said
to be {\it a partially-closed logarithmic derivative adapted to $\Om$ along $\La$\/} if:
\begin{itemize}

\item $d\Omega= \eta\wedge \Omega$ and $\eta$ is partially-closed, $d \eta = \eta \wedge \eta$, meromorphic with simple poles,
\item  $(\eta)_\infty = \bigcup\limits_{j=1} ^q \,S_j$, a union of irreducible codimension one analytic subsets $S_j\subset V$ in a neighborhood $V$ of
$\La$,
 \item given any regular point $p \in \La\backslash \sing(\fa)$ there exists a local chart $(y_1,\ldots,y_q,
z_1,\ldots,z_{n-q}) \in U$ for $M$, centered at $p$, such that:
$$
\aligned
&U \cap S_j = \{y_j=0\}, \quad j = 1,\ldots,q\\
&\Om = G.dY \qquad\text{and}\\
&\eta = dG.G^{-1}+\sum\limits_{j=1} ^q A_j.\frac{dy_j}{y_j}
 \qquad\text{where}\\
&Y = \begin{pmatrix} y_1\\ \vdots\\ y_q\end{pmatrix}\,,
\endaligned
$$
$G\colon U \to \GL_q(\co)$ is holomorphic  and $A_j$ is a
constant $q\times q$ complex  matrix.
\end{itemize}

\noindent  The  matrix $A_j$ is called the {\it residue
matrix\/} of $\eta$ with respect to $S_j$.
}
\end{Definition}

In what follows we consider the problem of extending a form $\eta$ from an affine transverse structure of $\fa$,   an analytic invariant
hypersurface.
The existence of such extension, as adapted closed logarithmic derivatives, is then assured by the following result:

\begin{Theorem} [Extension Lemma]
\label{Theorem:extensionlemma} Let ${\mathcal F}, \, \Lambda$ be as above. Suppose:

\item{{\rm (1)}} $\sing(\fa) \cap \La$ is nonempty and  consists of type I and type II generic singularities, and  singularities where $\dim\sing(\fa) \leq \dim(\fa) - 2$.

\item{{\rm (2)}} There exists a differential {\rm 1}-form $\eta$ defined in
some neighborhood $V$ of $\La$ minus $\La$ and its local separatrices
which defines a transverse affine structure for ${\mathcal F}$ in
this set $V\setminus (\Lambda \cup \sep(\Lambda))$, in the sense of Proposition~\ref{Proposition:formsglobal}.

\noindent  Then $\eta$ extends meromorphically to a neighborhood of
$\La$ as an adapted form (in the sense of Definition~\ref{Definition:adaptedeta}) to $\Om$
along $\La$.
\end{Theorem}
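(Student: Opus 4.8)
The plan is to prove the extension by a purely local analysis near $\La$, followed by a Hartogs/Levi-type filling across a residual set of high codimension. Since $\La\subset S_j$ for every $j$, the set on which $\eta$ is a priori given is $V^{*}=V\setminus(\La\cup\sep(\La))=V\setminus\bigcup_{j=1}^{q}S_j$, and the goal is to show that the given holomorphic $\eta$ on $V^{*}$, satisfying $d\Om=\eta\wedge\Om$ and $d\eta=\eta\wedge\eta$, extends to a meromorphic form on $V$ with polar set exactly $\bigcup_j S_j$, simple poles, and the logarithmic normal form of Definition~\ref{Definition:adaptedeta} at every regular point of $\La$. I would cover a neighborhood of $\La$ by three kinds of local pieces: (i) neighborhoods of regular points $p\in\La\setminus\sing(\fa)$; (ii) neighborhoods of the type I and type II generic singular points; and (iii) a residual analytic set $Z$, consisting of the singularities with $\dim\sing(\fa)\le\dim(\fa)-2$ together with the non-smooth or non-generic loci of $\bigcup_j S_j$, which by hypothesis (1) has codimension $\ge 2$ in $M$.

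On pieces of type (i) and (ii) I would construct an explicit adapted form and then match it to the given $\eta$. At a type I point I use the normal form $xdy-\la ydx=0$, $dz_j=0$ of Definition~\ref{Definition:intersectiontype}: taking $\Om=(xdy-\la ydx,\,dz_1,\dots,dz_{q-1})^{\mathsf T}$ one verifies directly that $\eta_0=\operatorname{diag}\!\big(\tfrac{dx}{x}+\tfrac{dy}{y},0,\dots,0\big)$ solves $d\Om=\eta_0\wedge\Om$ and $d\eta_0=\eta_0\wedge\eta_0$, and already has simple poles with constant residues along the local branches $\{x=0\}$ and $\{y=0\}$ of the separatrix set. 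At a type II point I use the Poincar\'e non-resonant linear model $X=\sum_j\la_j z_j\,\partial_{z_j}$ together with the logarithmic forms $\omega^{\nu}=\sum_j\alpha_j^{\nu}\tfrac{dz_j}{z_j}$ from the paragraph preceding Lemma~\ref{Lemma:nonresonantconstant} to build a matrix form $\eta_0$ with simple logarithmic poles along the coordinate hyperplanes $S_j$ and constant residues. In both cases, since $\eta$ and $\eta_0$ define the same affine transverse structure on the punctured piece, the uniqueness clause of Proposition~\ref{Proposition:formsgeneral} gives $\eta=\eta_0+dG\cdot G^{-1}$ for some holomorphic invertible $G$ on that piece; I would then argue that $dG\cdot G^{-1}$ extends holomorphically across the $S_j$, so that $\eta$ itself is adapted there. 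On pieces of type (i) the same scheme applies with $\eta_0=dH\cdot H^{-1}$, where $\Om=H\,dY$ is the regular trivialization, giving the normal form of Definition~\ref{Definition:adaptedeta} with the residue matrices already fixed by the adjacent generic singularities.

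Because a meromorphic extension of a given form is unique, the local extensions agree on overlaps and glue to a meromorphic form on $V\setminus Z$ with simple poles along $\bigcup_j S_j$. The set $Z$ is analytic of codimension $\ge 2$ in $M$, so a Levi/Hartogs-type extension theorem for meromorphic differential forms removes $Z$ and produces a meromorphic $\eta$ on all of $V$; the identities $d\Om=\eta\wedge\Om$ and $d\eta=\eta\wedge\eta$, valid off $Z$, persist by continuity, and $(\eta)_\infty=\bigcup_j S_j$ as required.

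The main obstacle is the residue computation at the type II singularities. There the residue of $\eta$ along each $S_j$ is a priori only a holomorphic matrix-valued function on $S_j$, and two points must be controlled before the argument closes: first, that $\eta$ has no worse than a simple pole across each separatrix, ruling out higher-order poles or essential singularities, which I expect to follow from the moderate growth afforded by the analytic linearization of the non-resonant Poincar\'e model; and second, that the residue matrices are actually constant. For the latter I would show that each entry $f$ of the normalized residue satisfies $df\wedge\omega^{1}\wedge\cdots\wedge\omega^{q}=0$ and then invoke Lemma~\ref{Lemma:nonresonantconstant}, whose non-resonance hypothesis is exactly what forces such functions to be constant. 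Matching the monodromy of the multivalued primitive $X$ of $\eta=dX\cdot X^{-1}$ with the linear holonomy of the singularity is the mechanism linking the residue to a constant conjugacy class, and is the delicate step on which the whole extension rests.
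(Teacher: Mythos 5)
Your global skeleton (local extension at the generic singular points, gluing, then a Hartogs-type filling across the codimension-$\ge 2$ residual set) agrees with the paper's, and you correctly identify Lemma~\ref{Lemma:nonresonantconstant} as the rigidity input. The gap is in the central step, the extension across the separatrices at a type II point, where both pillars of your argument fail as stated. First, you assert that $\eta$ and your explicit logarithmic model $\eta_0$ ``define the same affine transverse structure on the punctured piece'' in order to invoke the uniqueness clause of Proposition~\ref{Proposition:formsgeneral} and write $\eta=\eta_0+dG\cdot G^{-1}$. Hypothesis (2) only gives \emph{some} affine structure on $V\setminus(\La\cup\sep(\La))$; it need not be the one defined by $\eta_0$, and the set of forms compatible with a fixed $\Om$ is not a single gauge-equivalence class. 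Already for $\Om=dy$ on $\{y\ne 0\}$ the forms $\eta=0$ and $\eta=\la\,dy/y$ with $\la\notin\bz$ both satisfy the structure equations but are inequivalent (any $g$ with $dg\cdot g^{-1}=\la\,dy/y$ would be a branch of $y^\la$, which is multivalued); the same example embeds diagonally in any codimension $q$. So assuming the two structures agree is essentially assuming what the lemma must prove. Second, even granting $\eta=\eta_0+dG\cdot G^{-1}$ with $G$ holomorphic and invertible only off the separatrices, the claim that $dG\cdot G^{-1}$ ``extends holomorphically across the $S_j$'' is false in general: $G=e^{1/x}\,\Id$ gives $dG\cdot G^{-1}=-x^{-2}dx\,\Id$, a double pole, and essential behavior is equally possible. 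The appeal to ``moderate growth'' from the linearization is not an argument, and no estimate of this kind is supplied.

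The paper closes exactly this hole by a computation that needs neither a comparison of structures nor a growth estimate, and it proves the stronger statement that \emph{every} compatible form is logarithmic. Near a type II point, take the closed logarithmic forms $\omega^{1},\ldots,\omega^{q}$ annihilating the linearized vector field, assemble them into the meromorphic system $\Omega_0$, and write the holomorphic defining system as $\Om=G\,\Omega_0$ (such a $G$ exists because both systems define $\fa$, and it is automatically meromorphic across the hyperplanes since its entries are quotients of wedge products of the given forms). Setting $\eta_0:=\eta-dG\cdot G^{-1}$, closedness of $\Omega_0$ turns $d\Omega_0=\eta_0\wedge\Omega_0$ into $\eta_0\wedge\Omega_0=0$, which forces $\eta_0=\sum_{j}M_j\,\omega^{j}$ with \emph{holomorphic matrix} coefficients $M_j$ on the punctured set; then $d\eta_0=\eta_0\wedge\eta_0$, wedged with the remaining $\omega^{k}$, yields $dM_j\wedge\omega^{1}\wedge\cdots\wedge\omega^{q}=0$, and Lemma~\ref{Lemma:nonresonantconstant} makes every $M_j$ constant. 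This single stroke delivers the meromorphic extension, the simple poles, the constant residue matrices, and $d\eta_0=0$ simultaneously --- precisely the three points your proposal postpones. Your closing remark (apply Lemma~\ref{Lemma:nonresonantconstant} to entries $f$ with $df\wedge\omega^{1}\wedge\cdots\wedge\omega^{q}=0$) is the right idea, but it must be applied to the full coefficient matrices $M_j$ of $\eta_0$ in the logarithmic basis, not to a ``normalized residue'' whose existence (the simple-pole bound) you have not yet established. Replacing your matching step by this expansion-in-logarithmic-basis argument --- the paper's Claim~\ref{Claim:eta_0} inside Lemma~\ref{Lemma:eta_0} --- is what turns your outline into a proof.
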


 We will extend $\eta$ to $\La$ through the singularities of $\fa$ in $\Lambda$. According to classical Hartogs' extension theorem (\cite{GunningII,Gunning-Rossi}), this implies the extension to $\Lambda$. Choose
$p \in \sing(\fa) \cap \La$ and choose local coordinates
$(x,y,z_1,\ldots,z_{n-2}) \in U$, centered at $p$, as in Definition~\ref{Definition:intersectiontype}.

\begin{Lemma}
\label{Lemma:eta_0}  Let $\mathcal F$ be a codimension $q$  holomorphic
foliation with singularities, defined in an open polydisc $U\subset \mathbb C  ^{q+n}$, with a type I generic singularity or a type II generic  singularity at  the origin $0 \in \sing
({\mathcal F})\subset U$. Assume that  ${\mathcal F}$ is transversely affine in
$U\setminus \Lambda$, where $\Lambda\subset U$ is a finite union of
irreducible invariant  hypersurfaces, each one containing the origin.
Assume that $\mathcal F$ is given in $U$ by a  holomorphic $q\times
1$ matrix $1$-form $\Omega$ in $U$ with a $q\times q$ matrix {\rm
1}-form $\eta$ in $U$ satisfying:
\[
d\Omega = \eta \wedge \Omega, \, \, \,  d\eta = \eta \wedge \eta.
\]

\noindent  Then $\eta$ extends meromorphically to a neighborhood of
$\La$ as a partially-closed logarithmic derivative adapted to $\Omega$ along $\Lambda$  (in the sense of Definition~\ref{Definition:adaptedeta}).

\end{Lemma}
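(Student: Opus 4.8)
The plan is to reduce to the two explicit local normal forms, pass to a developing map for the transverse affine structure, check that the singularity is \emph{regular} so that $\eta$ acquires only simple poles, and finally use the non-resonance of the eigenvalues to force the residues to be constant. Throughout write $U^{*}:=U\setminus\Lambda$. By Definition~\ref{Definition:intersectiontype} I take coordinates $(x,y,z_{1},\dots,z_{n-2})$ in which a type I singularity is $x\,dy-\lambda y\,dx=0$, $dz_{1}=\dots=dz_{q-1}=0$, with $\lambda\notin\re_{-}\cup\bq_{+}$; by Definition~\ref{Definition:isolatedtype} a type II singularity is, on a transverse disc, the linear non-resonant foliation obtained by clearing denominators in the closed logarithmic system $\omega^{\nu}=\sum_{j}\alpha^{\nu}_{j}\frac{dz_{j}}{z_{j}}$. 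In both cases $\mathcal F$ has explicit first integrals ($y\,x^{-\lambda}$ and the $z_{i}$ in type I; the branches of $\prod_{j}z_{j}^{\alpha^{\nu}_{j}}$ in type II), the invariant divisor $\Lambda$ is a union of coordinate hyperplanes $S_{j}=\{y_{j}=0\}$, and $U^{*}$ is homotopy equivalent to a product of punctured discs, so $\pi_{1}(U^{*})$ is \emph{abelian}, generated by loops $\gamma_{j}$ about the $S_{j}$. On the universal cover of $U^{*}$, Corollary~\ref{Corollary:localformetaomega}(b) develops the given pair as $\Omega=X\,dY$ and $\eta=dX\cdot X^{-1}$ with $(X,Y)$ holomorphic and unique up to a left affine translation; the monodromy along $\gamma_{j}$ then has a well-defined linear part $A_{j}\in\GL_{q}(\co)$, and abelianness makes the $A_{j}$ commute.

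\textbf{Simple poles.} Because $Y$ is an affine-multivalued first integral assembled from the explicit first integrals above, the developing matrix $X$ has moderate growth along each $S_{j}$ (dominated by a power of $1/|y_{j}|$ times a power of $\log|y_{j}|$), i.e. $0$ is a regular singular point of the linear system $dX=\eta\cdot X$. Fixing commuting logarithms $M_{j}:=\frac{1}{2\pi i}\log A_{j}$, the matrix $\widehat X:=X\prod_{j}y_{j}^{-M_{j}}$ is single-valued and of moderate growth on $U^{*}$, hence extends meromorphically across the divisor $\Lambda$ (Riemann extension, entrywise). Substituting back into $\eta=dX\cdot X^{-1}=d\widehat X\cdot\widehat X^{-1}+\widehat X\big(\sum_{j}M_{j}\frac{dy_{j}}{y_{j}}\big)\widehat X^{-1}$ shows that $\eta$ is meromorphic on a full neighbourhood of $\Lambda$ with at worst \emph{simple} poles along each $S_{j}$.

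\textbf{Constant residues and the adapted form.} A priori the residue $R_{j}:=\operatorname{Res}_{S_{j}}\eta$ is only a holomorphic matrix-valued function along $S_{j}$. Extracting the polar part of the equation $d\eta=\eta\wedge\eta$ along $S_{j}$ shows that each entry $f$ of $R_{j}$ is a first integral of the foliation, i.e. $df\wedge\Omega_{1}\wedge\cdots\wedge\Omega_{q}=0$; Lemma~\ref{Lemma:nonresonantconstant} in the type II chart --- and its evident analogue for the non-resonant linear foliation $x\,dy-\lambda y\,dx=0$ in the type I chart --- then forces $f$ to be \emph{constant}. Thus the residue matrices $A_{j}:=R_{j}$ are constant, so $d\big(\sum_{j}A_{j}\frac{dy_{j}}{y_{j}}\big)=0$ and $\big(\sum_{j}A_{j}\frac{dy_{j}}{y_{j}}\big)\wedge\big(\sum_{k}A_{k}\frac{dy_{k}}{y_{k}}\big)=0$ because the $A_{j}$ commute. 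Subtracting this logarithmic part and normalizing the remaining holomorphic term by Corollary~\ref{Corollary:localformetaomega}(a) (on a simply-connected polydisc, exactly as in the proof of Proposition~\ref{Proposition:formsgeneral}) puts $\eta$ into the shape $dG\cdot G^{-1}+\sum_{j}A_{j}\frac{dy_{j}}{y_{j}}$ of Definition~\ref{Definition:adaptedeta}.

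\textbf{Gluing, deep singularities, and the main obstacle.} The meromorphic forms produced at the regular points of $\Lambda$ and at the type I/II singularities coincide with the given $\eta$ off $\Lambda$, hence glue to a single meromorphic extension; across the components of $\sing(\mathcal F)\cap\Lambda$ of codimension $\ge 2$ (where $\dim\sing(\mathcal F)\le\dim(\mathcal F)-2$) the extension is supplied by Hartogs' theorem (\cite{GunningII,Gunning-Rossi}). The hard part is concentrated in the second and third paragraphs: proving that the affine developing matrix $X$ has only moderate growth (so that no pole of order $>1$ can occur) and that the residue matrices are genuinely constant rather than merely $\mathcal F$-invariant holomorphic functions. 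Both are exactly where the non-resonant Poincar\'e hypothesis is indispensable --- moderate growth because the singularity is analytically linearizable with eigenvalues in a Poincar\'e domain, and constancy of residues through Lemma~\ref{Lemma:nonresonantconstant} --- and this is where essentially all of the genuine work lies.
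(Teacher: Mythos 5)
Your route --- develop the pair on the universal cover of $U^{*}$ via Corollary~\ref{Corollary:localformetaomega}(b), untwist the monodromy by $\prod_j y_j^{-M_j}$, extend by Riemann, then clean up residues --- is genuinely different from the paper's, but it breaks at precisely the two steps you yourself flag as ``where essentially all of the genuine work lies'', and neither step is actually carried out. The moderate-growth claim for $X$ does not follow from anything you state: the developing submersion $Y$ is only known to factor, on the leaf space, as $Y=\Phi\circ L$ through the explicit multivalued first integrals $L$, where $\Phi$ is an \emph{unknown} biholomorphism onto its image, and nothing bounds the growth of $\Phi$ (hence of $X$) as one approaches $\Lambda$. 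Linearizability of $\mathcal F$ cannot supply this bound, because $\eta$ is not determined by $\mathcal F$: ``regular singularity'' is a property of the system $dX=\eta\cdot X$, i.e.\ of the very object whose behaviour along $\Lambda$ is in question. To control $\Phi$ one must exploit the affine functional equations imposed by the monodromy together with non-resonance (density of the relevant translation group); that rigidity argument is the heart of your approach and is missing.

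The second gap is worse, because it is an invalid inference rather than an omitted estimate: even granting that $\widehat X$ extends meromorphically, it does \emph{not} follow that $\eta=d\widehat X\cdot\widehat X^{-1}+\widehat X\bigl(\sum_j M_j\frac{dy_j}{y_j}\bigr)\widehat X^{-1}$ has only simple poles. For matrix-valued $\widehat X$ the form $d\widehat X\cdot\widehat X^{-1}$ can have poles of higher order: already $\widehat X=\left(\begin{smallmatrix} y_1 & 1\\ 0 & y_1\end{smallmatrix}\right)$, which is holomorphic, single-valued and of moderate growth, gives $d\widehat X\cdot\widehat X^{-1}$ with the entry $-dy_1/y_1^{2}$. (In rank $q=1$ the inference is valid --- a logarithmic derivative of a meromorphic function has simple poles --- which is why the codimension-one argument of \cite{scardua} can go this way; in matrix rank, regular singularity only yields logarithmic poles after a further meromorphic gauge change, whereas the lemma is about the given $\eta$.) Your residue paragraph cannot repair this, since it presupposes the poles are already simple. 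Note the contrast with the paper's proof, which needs no growth estimates at all: it passes to the closed logarithmic system $\Omega_0$ (entries $\omega^{\nu}=\sum_j\alpha_j^{\nu}dx_j/x_j$), writes the corresponding form as $\eta_0=\sum_{\nu}M_{\nu}\,\omega^{\nu}$ with matrix-valued functions $M_\nu$ (possible because $\eta_0\wedge\Omega_0=d\Omega_0=0$), and then the structure equations give $dM_{\nu}\wedge\omega^{1}\wedge\cdots\wedge\omega^{q}=0$, so each $M_{\nu}$ is constant by Lemma~\ref{Lemma:nonresonantconstant}; meromorphic extension, simple poles, closedness and adaptedness all drop out simultaneously. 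Your instinct to use Lemma~\ref{Lemma:nonresonantconstant} is correct, but the paper applies it to the coefficients of $\eta$ in the logarithmic basis, where it settles everything, rather than to residues, where it settles only part of the problem and only after the pole order has already been controlled by other means.
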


\begin{proof}
For the sake of simplicity of the notation we will assume that
$\mathcal F$ has codimension $q=2$ and the ambient has dimension
$q+1=3$. Let us also assume that the singularity is isolated, i.e., of non-resonant Poincar\'e of  type II. The general case is pretty similar. Let then $X=\sum_{j=1}^3
\lambda _j \,  x_j \, ({\partial}/{\partial x_j})$ be a holomorphic
vector field defining $\mathcal F$ in suitable coordinates $(x_1,
x_2, x_3) \in U^\prime$, in a connected neighborhood $0 \in
U^\prime\subset U$ of $0\in {\mathbb C}^3$, with
$\{\lambda_1,\lambda_2,\lambda_3\}$ linearly independent over
$\mathbb Q$.
 Given complex numbers $a_1,a_2,a_3$ we  define a closed 1-form $\omega=\sum\limits
 _{k=1}^3 a_k \, dx_k/x_k$. Then $\omega(X)=0$ if and only if $\sum\limits_{k=1}^3
 a_k \, \lambda_k=0$.
Thus, we can choose 1-forms  $\omega_1,\omega_2$  given by
$\omega_j=\sum_{j=1}^3 a_k^j \, \, {dx_k}/{x_k}, \, a_k ^j \in
{\mathbb C}$, such that: $\omega_1$ and $\omega_2$  are linearly
independent in the complement of $\cup_{j=1}^3 (x_j=0)$ and
$\Theta_j(X)=0, j=1,2$.

Once we fix such 1-forms, the foliation $\mathcal F$ is defined by
the integrable system of meromorphic 1-forms $\{\omega_1,
\omega_2\}$ in $U$. Notice that the polar set of the $\omega_j$ in
$U^\prime$ consists of the coordinate hyperplanes $\{x_i=0\}\subset
U^\prime, \, i=1,2,3.$ \, Let $\Omega_0$ be the $2\times 1$ meromorphic matrix valued  1-form given by the system $\{\omega_1, \omega_2\}$.
\begin{Claim}
\label{Claim:eta_0}
Let $\eta_0$ be a  $2\times 2$ holomorphic
matrix valued 1-form defined in $U^\prime \setminus \bigcup
\limits_{i=1}^3 \{x_i=0\}$, such  that
$d\Omega_0 = \eta_0 \wedge \Omega_0, \, \, \,  d\eta_0 = \eta
_0\wedge \eta_0.$
Then:
\begin{enumerate}
 \item $\eta_0$ is closed, $d \eta_0 = 0$.
 \item The matrix  valued 1-form $\eta_0$ extends to a meromorphic matrix valued 1-form
in $U^\prime$,   having polar divisor of order one in $U^\prime$.
\item The extension of $\eta_0$ is adapted to $\Omega_0$ along $\Lambda$.
\end{enumerate}
\end{Claim}

Let us see how the claim proves the lemma.
Indeed, as for the original forms $\Omega$ and $\eta$ we have $\Omega =
G\Omega_0$  for some holomorphic matrix  $G\colon \widetilde U \to
\GL_q(\co)$. Thus if we define $\eta_0:= \eta -  dG\cdot G^{-1}$
then we are in the situation of the above claim. Thus we conclude
that $\eta$ extends to $U ^\prime$ as a closed meromorphic 1-form
with simple poles and polar divisor consisting of the coordinate
planes. Therefore, the same conclusion of the above claim holds for
$\eta$ and we prove the lemma.

\begin{proof}[Proof of the claim]
Since each $\omega_j$ is closed the matrix form $\Omega_0$ is
closed. From $d\Omega_0 = \eta_0 \wedge \Omega_0$ we have $\eta_0
\wedge \Omega_0=0$.
Now we observe  that there are holomorphic $2 \times 2$ scalar
matrices  $M_1, M_2$  defined in $U^\prime \setminus \{x_1 x_2 x_3
=0\}$, such that $\eta_0= M_1 \omega_1 + M_2 \omega_2$, where the
multiplication of the matrix by the 1-form is the standard scalar
type multiplication. Indeed, it is enough to complete the pair $\omega_1, \omega_2$ into  a basis of the space of holomorphic 1-forms and express $\eta_0$ in this basis. Then the condition $\eta_0 \wedge \Omega_0$ means that the coefficients of $\eta_0$ in the other elements of the basis are all identically zero.

For  any holomorphic  $2\times 2$ scalar (holomorphic) matrix $M$ and a $2
\times 1$ matrix valued 1-form $\Omega$ we have the easily verified formula for
the exterior derivative:
\[
d (M\Omega) = dM \wedge \Omega + M d \Omega
\]

Therefore  we have
\[
d\eta_0= dM_1 \wedge \omega_1 + d M_2 \wedge \omega_2.
\]

Also of easy verification we have
\[
\eta_0 \wedge \eta_0 = [M_1, M_2] \omega_1 \wedge \omega_2
\]
where $[,]$ denotes the matrix Lie bracket. Thus we obtain
\[
 dM_1 \wedge \omega_1 + d M_2 \wedge \omega_2= [M_1, M_2] \,\omega_1 \wedge \omega_2.
 \]

Taking the exterior product with $\omega_2$ in the  above equation
we obtain
\[
 dM_1 \wedge \omega_1 \wedge \omega_2=0
 \]
 Hence, $M_1$ is a meromorphic first integral for the foliation defined by
the system $\{\omega_1, \omega_2\}$ in $\widetilde U:= U^\prime
\setminus \{x_1 x_2 x_3=0\}$. This foliation is exactly the
restriction of $\mathcal F$ to this open set. Since $\mathcal F$ is
defined by the vector field $X$ in $\widetilde U$ and this vector
field is linear without resonance, it follows from Lemma~\ref{Lemma:nonresonantconstant}  that $M_1$ is constant in $\widetilde U$. Similarly we can
conclude that $M_2$ is constant. This implies the extension result
and the other items in Claim~\ref{Claim:eta_0}.
\end{proof}
The proof of Lemma~\ref{Lemma:eta_0} is complete.
\end{proof}

\begin{proof}[Proof of Theorem~\ref{Theorem:extensionlemma}]
The proof follows the same argumentation 
as the proof of Lemma 3.2 in Chapter I in \cite{scardua}. 
Indeed,  Lemma~\ref{Lemma:eta_0}  implies that  $\eta$ extends
meromorphically to $\La \cup \text{sep}\,(\La)$. By construction  this extension
is adapted to $\Om$ along $\La$. \end{proof}

%%%%%%%%%%%%%%%%%%%%%%%%%%%%%%%%%%%%%%%%%%%%%%%%%%%%%%%%%%%%%%%%%%%%%%%%%%%%%%%%%%
%%%%%%%%%%%%%%%%%%%%%%%%%%%%%%%%%%%%%%%%%%%%%%%%%%%%%%%%%%%%%%%%%%%%%%%%%%%%%%%%%%%%%%%

\bibliographystyle{amsalpha}

\end{document}